\colorlet{shadecolor}{blue!15}
\newtheorem{theorem}{Theorem}[section]
\newtheorem{corollary}[theorem]{Corollary}
\newtheorem{lemma}[theorem]{Lemma}
\newtheorem{proposition}[theorem]{Proposition}
\newtheorem{definition}[theorem]{Definition}
\newtheorem{remark}[theorem]{Remark}
\newcommand{\be}[1]{\begin{equation}\label{#1}}
\newcommand{\ee}{\end{equation}}
\numberwithin{equation}{section}
\newcommand{\ba}[1]{\begin{align}\label{#1}}
\newcommand{\ea}{\end{align}}
\numberwithin{equation}{section}
\newcommand{\ben}{\begin{equation*}}
\newcommand{\een}{\end{equation*}}
\numberwithin{equation}{section}
\newenvironment{proof}[1][\relax]
  {\paragraph{Proof\ifx#1\relax\else~of #1\fi}}%
  {~\hfill$\square$\par\bigskip}
\newcommand{\calC}{\mathcal{C}}
\newcommand{\calH}{\mathcal{H}}
\newcommand{\calV}{\mathcal{V}}
\newcommand{\bbE}{\mathbb{E}}
\newcommand{\bbN}{\mathbb{N}}
\newcommand{\bbP}{\mathbb{P}}
\newcommand{\bbR}{\mathbb{R}}
\newcommand{\bbZ}{\mathbb{Z}}
\newcommand{\ep}{\varepsilon}
\newcommand{\rk}[1]{\bgroup\color{red}%
  \par\medskip\hrule\smallskip%
  \noindent\textbf{#1}%
  \par\smallskip\hrule\medskip\egroup}
\newcommand{\bexo}{\begin{mdframed}[backgroundcolor=lightgray!20]
\scriptsize}
\newcommand{\eexo}{\end{mdframed}}
\title{Sharp threshold phenomena in statistical physics}
\author{Hugo Duminil-Copin\thanks{
\texttt{duminil@ihes.fr} Institut des Hautes \'Etudes Scientifiques and Universit\'e de Gen\`eve
 \newline
This research was funded by a IDEX Chair from Paris Saclay and by the NCCR SwissMap from the Swiss NSF. The author is thankful to anonymous referees for their numerous comments: they tremendously improved the paper.}}
\date{\today}
\newcommand{\ff}{{\mathbf f}}\newcommand{\TT}{{\mathbf T}}
\begin{document}
\maketitle

\begin{abstract}
This text describes the content of the Takagi lectures given by the author in Kyoto in 2017. The lectures present some aspects of the theory of sharp thresholds for boolean functions  and its application to the study of phase transitions in statistical physics.

 \end{abstract}

\section{Introduction}

In physics, a {\em phase transition}  is a discontinuous change of behavior in a physical system as some of its parameters (for instance the temperature, the density or the pressure) vary continuously. The most classical examples are probably the transitions between solid, liquid and gaseous states of matter, and the transition from a ferromagnet to a paramagnet at Curie's temperature, but physics offers many examples of phase transitions, whose understanding is crucial both theoretically and practically.

As an illustration for what will come next in these lectures, let us present the phase transition in a specific model for porous media called Bernoulli percolation. Consider the square lattice $\bbZ^2$ with vertex-set given by points of $\bbR^2$ with integer coordinates, and edge-set $\bbE^2$ given by pairs $\{x,y\}\subset \bbZ^2$ with $\|x-y\|=1$ ($\|\cdot\|$ is the Euclidean norm). Define the random graph, introduced by Broadbent and Hammersley in \cite{BroHam57}, obtained by keeping each edge of $\bbZ^2$ with probability $p$ (therefore erasing it with probability $1-p$) independently of the other edges. 
The large scale properties of the random graph constructed like that change drastically at a critical value of the parameter $p$. More precisely, there exists $p_c\in[0,1]$ such that 
\begin{itemize}[noitemsep]
\item If $p<p_c$, the probability that there is an infinite connected component is zero.
\item If $p>p_c$, the probability that there is an infinite connected component is one.
\end{itemize}
(Note that we do not say anything about the case $p=p_c$.)
This is an archetypal example of a phase transition in statistical physics: as the parameter $p$ (which can be interpreted as density) is varied continuously through the value $p_c$, the probability of  having an infinite connected component jumps from 0 to 1. 

It was conjectured early that the value of the critical point $p_c$ is equal to $1/2$, but proving this statement took more than twenty years. One of the goals of these lectures is to provide a modern proof of this statement, based on the notion of sharp threshold.
\begin{figure}[h]
\begin{center}\includegraphics[width=0.50\textwidth,angle=90]{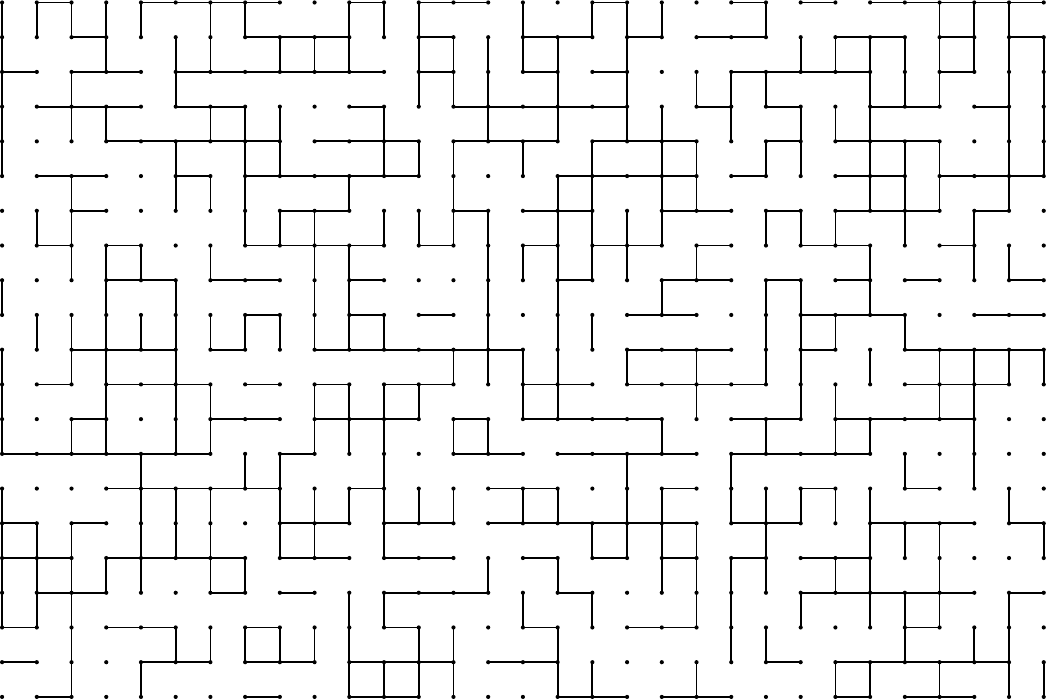}
\caption{A sampled configuration of Bernoulli percolation for the value of the parameter $p$ equal to $1/2$.}\end{center}
\end{figure}
\medbreak
In mathematics, a {\em finite} random system undergoes a {\em sharp threshold} if its qualitative behavior changes quickly as the result of a small perturbation of the parameters ruling the probabilistic structure. A fundamental example is provided by sharp thresholds undergone by the averages of boolean functions with respect to a product measure. More formally, 
for $p\in[0,1]$, let $\bbP_p$ be the law of a sequence of iid Bernoulli random variables with parameter $p$. In a slight abuse of notation, we will identify $\bbP_p$ with its restriction to the first $n$ random variables when considering a boolean function $\ff:\{0,1\}^n\rightarrow \{0,1\}$. An element of $\{0,1\}^n$ will generically be denoted by $\omega=(\omega_i:1\le i\le n)$. For future reference, we  denote the set $\{1,\dots,n\}$ by $[n]$. 
 We will also write $f(p):=\bbE_p[\ff]$ (and by extension $f_n(p):=\mathbb E_p[\ff_n]$ if we consider a sequence of boolean functions $\ff_n:\{0,1\}^n\rightarrow \{0,1\}$).

\begin{definition}
A sequence of increasing\footnote{With respect to the standard partial ordering on $\{0,1\}^n$: $\omega\le \omega'$ if  $\omega_i\le \omega'_i$ for every $1\le i\le n$.} boolean functions $(\ff_n)$ undergoes a {\em sharp threshold at $(p_n)$} if there exists $(\delta_n)$ tending to 0 such that $f_n(p_n-\delta_n)\rightarrow 0$ and $f_n(p_n+\delta_n)\rightarrow 1.$
\end{definition}
The notion of sharp threshold emerged in the combinatorics community studying graph properties of random graphs. In \cite{ErdRen59}, Erd\"os and Renyi introduced a model of random graph $G(n,p)$ with vertex-set $V=[n]$ and edge-set $E=\{i\in I:\omega_i=1\}$, where $I$ is the set of pairs of integers in $[n]$, which is identified with the set $[N]$ with $N=\binom n2$ to enter in our framework to define the measure as a product of Bernoulli.
The authors were originally interested in {\em graph properties} of $G(n,p)$, i.e.~properties of graphs that depend only on their isomorphism class\footnote{Isomorphism class is understood for the equivalence via graph isomorphisms. More formally, consider two graphs $G=(V,E)$ and $G'=(V',E')$, with $V$ and $V'$ the vertex-sets and $E$ and $E'$ the edge-sets. A map $T:V\longrightarrow V'$ is a graph isomorphism between $G$ and $G'$ if it is bijective and if $\{x,y\}\in E$ if and only if $\{T(x),T(y)\}\in E'$.}. They first focused on two specific properties, described below,

\paragraph{Example 1: Connectivity of the graph} Erd\"os and R\'enyi \cite{ErdRen59} proved that if $A_n$ is the event that the graph is connected, then $(\mathbbm1_{A_n})$ undergoes a sharp threshold at $p_n=\frac{\log n}{n}$.

\paragraph{Example 2: Existence of a giant connected component} Later, Erd\"os and R\'enyi also proved that if $B_n$ denotes the existence of a component in $G(n,p)$ of size larger or equal to $r_n$, where the sequence $(r_n)$ satisfies $\frac{r_n}{\log n}\rightarrow \infty$ and $\frac{r_n}{n}\rightarrow 0$, then $(\mathbbm1_{B_n})$ undergoes a sharp threshold at $p_n=\tfrac1 n$.
 \medbreak
Since this seminal paper, many other graph properties have been studied, including existence of certain induced subgraphs, etc. In fact, it was shown later (we will justify this in the next section) that many graph properties undergo sharp thresholds, namely the monotonic properties (examples include  being Hamiltonian, being non-planar, containing a clique of size $k$, having a diameter smaller than $r$, etc)

\paragraph{Example 3: Monotonic properties} Every sequence $(\mathbbm 1_{A_n})$ of increasing graph properties undergoes a sharp threshold with $\delta_n\log n\rightarrow\infty$.
\medbreak
Let us remark that properties not undergoing a sharp threshold must essentially depend on a (uniformly) bounded number of bits. 

When considering systems from statistical physics, the notion of sharp threshold often corresponds to a finite version of the  notion of phase transition. In these lectures, we propose to start from the mathematical theory of sharp thresholds and explain how this theory sheds some new light on the understanding of phase transitions in statistical physics.

 Section 3 presents two theorems from the study of boolean functions on product spaces. Section 4 discusses two applications to sharp phase transitions in statistical physics. Finally, Section 5 presents generalizations and applications of the theory of sharp thresholds for boolean functions to monotonic measures.

\section{How to prove that a sequence of boolean functions undergoes a sharp threshold?}\label{sec:2.2}

\subsection{The Margulis-Russo differentiation formula}

Let us start the study of averages of boolean functions by giving an expression for their derivatives. 
For a boolean function $\ff:\{0,1\}^n\rightarrow \{0,1\}$, introduce the notation $\nabla_i\ff(\omega):=f(\omega)- \ff(\mathsf{Flip}_i(\omega))$, where $\mathsf{Flip}_i(\omega)$ is the configuration obtained by flipping the state of $\omega_i$, i.e.~more formally
$$\mathsf{Flip}_i(\omega)_j:=\begin{cases}\ \ \, \omega_j&\text{ for }j\neq i,\\
1-\omega_j&\text{ for }j=i.\end{cases}$$
The {\em influence} of $1\le i\le n$:
$${\rm Inf}_i[\ff]:=\bbE_p[|\nabla_i\ff(\omega)|],$$

Note that the influence depends on $p$ even though we do not refer to it in the notation.

\begin{lemma}[Margulis \cite{Mar74}, Russo \cite{Rus81}]\label{lem:dif}
For $\ff:\{0,1\}^n\rightarrow\{0,1\}$ increasing, we have that $$f'(p)=\sum_{i=1}^n{\rm Inf}_i[\ff].$$
\end{lemma}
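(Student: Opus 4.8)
The plan is to introduce the auxiliary multi-parameter function
\[
F(p_1,\dots,p_n):=\bbE_{p_1,\dots,p_n}[\ff],
\]
where under $\bbP_{p_1,\dots,p_n}$ the coordinates $\omega_1,\dots,\omega_n$ are independent with $\omega_i$ Bernoulli of parameter $p_i$. Since $\ff$ takes finitely many values, $F$ is a polynomial in $(p_1,\dots,p_n)$ (in fact multilinear, being affine in each $p_i$), hence smooth, and $f(p)=F(p,\dots,p)$. By the chain rule it therefore suffices to prove that
\[
\frac{\partial F}{\partial p_i}(p,\dots,p)={\rm Inf}_i[\ff]\qquad\text{for each }1\le i\le n,
\]
and then to sum over $i$.

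To compute $\partial F/\partial p_i$, fix $i$ and write $\omega^{i\to\epsilon}$ for the configuration obtained from $\omega$ by setting its $i$-th coordinate to $\epsilon\in\{0,1\}$. The two functions $\omega\mapsto\ff(\omega^{i\to 1})$ and $\omega\mapsto\ff(\omega^{i\to 0})$ do not depend on $\omega_i$, and one has the pointwise identity $\ff(\omega)=\omega_i\,\ff(\omega^{i\to1})+(1-\omega_i)\,\ff(\omega^{i\to0})$. Taking $\bbE_{p_1,\dots,p_n}$ and using that $\omega_i$ is independent of the remaining coordinates yields $F=p_i\,\bbE[\ff(\omega^{i\to1})]+(1-p_i)\,\bbE[\ff(\omega^{i\to0})]$, where the two expectations on the right do not involve $p_i$. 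Differentiating in $p_i$ gives $\partial F/\partial p_i=\bbE_{p_1,\dots,p_n}\big[\ff(\omega^{i\to1})-\ff(\omega^{i\to0})\big]$, and evaluating on the diagonal $\partial F/\partial p_i(p,\dots,p)=\bbE_p\big[\ff(\omega^{i\to1})-\ff(\omega^{i\to0})\big]$.

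It remains to identify this expectation with ${\rm Inf}_i[\ff]=\bbE_p[|\nabla_i\ff(\omega)|]$, and this is the only place where the monotonicity hypothesis enters. Since $\omega^{i\to1}\ge\omega^{i\to0}$ in the partial order on $\{0,1\}^n$, $\ff$ increasing forces $\ff(\omega^{i\to1})-\ff(\omega^{i\to0})\ge 0$ for every $\omega$. On the other hand, inspecting the definition of $\mathsf{Flip}_i$ we see that $\nabla_i\ff(\omega)=\ff(\omega^{i\to1})-\ff(\omega^{i\to0})$ when $\omega_i=1$ and $\nabla_i\ff(\omega)=\ff(\omega^{i\to0})-\ff(\omega^{i\to1})$ when $\omega_i=0$, so in both cases $|\nabla_i\ff(\omega)|=\ff(\omega^{i\to1})-\ff(\omega^{i\to0})$. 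Hence $\partial F/\partial p_i(p,\dots,p)={\rm Inf}_i[\ff]$, and summing over $i$ gives $f'(p)=\sum_{i=1}^n{\rm Inf}_i[\ff]$.

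I do not expect a real obstacle here: the argument is essentially bookkeeping once one passes to the multi-parameter function $F$, and monotonicity is used only to replace $\ff(\omega^{i\to1})-\ff(\omega^{i\to0})$ by its absolute value (without it one still obtains $f'(p)=\sum_i\bbE_p[\nabla_i\ff]$). A completely equivalent alternative is to realize $\omega_i=\mathbbm1_{U_i\le p}$ for i.i.d.\ uniform variables $U_i$ and to estimate $f(p+\delta)-f(p)$ by conditioning on which, if any, of the $U_i$ fall in $(p,p+\delta]$; this is marginally more cumbersome to make rigorous but leads to the same formula.
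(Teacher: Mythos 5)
Your proof is correct, and it takes a genuinely different route from the paper. You pass to the multilinear extension $F(p_1,\dots,p_n)$, compute each partial derivative via the decomposition $\ff(\omega)=\omega_i\,\ff(\omega^{i\to1})+(1-\omega_i)\,\ff(\omega^{i\to0})$, and invoke the chain rule; monotonicity enters only at the very end, to identify $\bbE_p[\ff(\omega^{i\to1})-\ff(\omega^{i\to0})]$ with $\bbE_p[|\nabla_i\ff|]$. The paper instead differentiates $f(p)=\sum_\omega\ff(\omega)p^{|\omega|}(1-p)^{n-|\omega|}$ in the single variable $p$ to obtain the covariance identity $f'(p)=\tfrac1{p(1-p)}\sum_i\bbE_p[\ff(\omega)(\omega_i-p)]$, and then evaluates each covariance by restricting to the pivotal event $A_i=\{\nabla_i\ff\neq0\}$ and using monotonicity there. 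Both arguments are standard and of comparable length. Yours makes transparent exactly where monotonicity is used and generalizes immediately to non-monotone $\ff$ (yielding a signed formula); the paper's yields as a by-product the identity $\mathrm{Cov}_p(\ff,\omega_i)=p(1-p)\,{\rm Inf}_i[\ff]$ for increasing $\ff$, which is useful elsewhere (e.g.\ it is implicitly the form in which the formula is combined with the OSSS inequality).

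One small caveat about your closing aside: with the paper's definition $\nabla_i\ff(\omega)=\ff(\omega)-\ff(\mathsf{Flip}_i(\omega))$, the quantity $\bbE_p[\nabla_i\ff]$ equals $(2p-1)\,\bbE_p[\ff(\omega^{i\to1})-\ff(\omega^{i\to0})]$, not $\bbE_p[\ff(\omega^{i\to1})-\ff(\omega^{i\to0})]$ itself; so the correct general (non-monotone) statement is $f'(p)=\sum_i\bbE_p[\ff(\omega^{i\to1})-\ff(\omega^{i\to0})]$. This does not affect the proof of the lemma.
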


\begin{proof}Set $|\omega|=\sum_{i=1}^n \omega_i$.
Differentiating $f(p)=\sum_{\omega}\ff(\omega)p^{|\omega|}(1-p)^{n-|\omega|}$ with respect to $p$ immediately gives
  \begin{align}\nonumber f'(p)&=\tfrac1p\bbE_p[\ff(\omega)|\omega|]-\tfrac1{1-p}\bbE_p[\ff(\omega)(n-|\omega|)]\\
  &=\tfrac{1}{p(1-p)}\sum_{i=1}^n\bbE_p[\ff(\omega)(\omega_i-p)],\label{eq:ok}\end{align}
Note that conditioned on  $\omega\notin A_i=\{\omega:\nabla_i\ff(\omega)\ne 0\}$, then $\omega_i-p$ is (conditionally) independent of $\ff(\omega)$ and its (conditional) average in \eqref{eq:ok} is zero. Therefore, 
$$\bbE_p[\ff(\omega)(\omega_i-p)]=\bbE_p[\ff(\omega)(\omega_i-p)\mathbbm1_{A_i}].$$
For $\omega\in A_i$, the fact that $\ff$ is increasing implies that $\ff(\omega)=0$ if $\omega_i=0$, and $\ff(\omega)=1$ if $\omega_i=1$. We deduce that 
$$\bbE_p[\ff(\omega)(\omega_i-p)]=(1-p)\bbP_p[A_i\cap\{\omega_i=1\}].$$
Since $\omega_i$ and $A_i$ are independent, we conclude that
\begin{equation*}
\bbE_p[\ff(\omega)(\omega_i-p)]=p(1-p){\rm Inf}_i[\ff].
\end{equation*}
Inserting this expression in \eqref{eq:ok} implies the claim.
\end{proof}
The previous lemma immediately gives that $p\mapsto f(p)$ is increasing and differentiable. 
Therefore, if one can prove bounds of the type 
\begin{equation}\label{eq:fund}\sum_{i=1}^n{\rm Inf}_i[\ff]\ge C\,{\rm Var}_p(\ff)\end{equation}
for some large constant $C$, one deduces that the window of values of $p$ for which $f(p)$ remains far from $0$ and $1$ is necessarily small. Indeed, since $\ff$ takes values in $\{0,1\}$, we find 
$${\rm Var}_p(\ff)=f(p)(1-f(p))$$
which, together with Lemma~\ref{lem:dif}, enables one to rewrite \eqref{eq:fund} as
$$\Big(\log \tfrac{f(p)}{1-f(p)}\Big)'=\tfrac{f'(p)}{f(p)(1-f(p))}\ge C.$$
Let $p$ be such that $f(p)=\tfrac12$, then for any $\delta>0$, integrating the previous differential inequality between $p-\delta$ and $p$ gives
$f(p-\delta)\le e^{-C\delta}$. Similarly, integrating the differential inequality between $p$ and $p+\delta$ gives $ f(p+\delta)\ge 1-e^{-C\delta}$. In particular, this shows that for $C\delta\ge \log(\tfrac1{\ep})$, one has $f(p-\delta)\le\ep$ and $f(p+\delta)\ge 1-\ep$. The next sections are describing arguments leading to inequalities of the form of \eqref{eq:fund}.

\subsection{A sharp threshold inequality}

Historically, the general theory of sharp thresholds for discrete product spaces was initiated by Russo in \cite{Rus82} and Kahn, Kalai and Linial in \cite{KahKalLin88} in the case of the uniform measure on $\{0,1\}^n$, i.e.~to the case of $\bbP_p$ with  $p=1/2$. There, Kahn, Kalai and Linial used the Bonami-Beckner inequality \cite{Bek75,Bon70} to deduce inequalities between the variance of a boolean function and influences of this function (Beckner proved a similar inequality for Gaussian measures). Bourgain, Kahn, Kalai,
Katznelson and Linial \cite{BouKahKal92} extended these inequalities to product spaces $[0,1]^n$ endorsed with the uniform measure (for which the Bonami-Beckner inequality does not hold). Then, a discretization scheme enables one to deduce sharp threshold results on $\{0,1\}^n$ for $\bbP_p$ with arbitrary $p\in[0,1]$. Recently, the result was extended to any product space in \cite{GriJanNor15}. 
Here, we state a theorem due to Talagrand \cite{Tal94} which is essentially equivalent to the BKKKL result \cite{BouKahKal92} for what we have in mind. 

\begin{theorem}[Talagrand \cite{Tal94}]\label{maximum_influence}
There exists a constant $c>0$ such that for any $p\in[0,1]$ and $n\in\bbN$, the following holds. For any increasing boolean function $\ff:\{0,1\}^n\rightarrow\{0,1\}$,
$${\rm Var}_p(\ff)\le c\log\tfrac1{p(1-p)}\sum_{i=1}^n \frac{{\rm Inf}_i[\ff]}{\log (1/{\rm Inf}_i[\ff])}.$$

\end{theorem}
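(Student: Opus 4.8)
The plan is to prove this via hypercontractivity (the Bonami–Beckner inequality) applied to the Fourier–Walsh expansion, following the Kahn–Kalai–Linial strategy but carrying out the computation at general $p$ rather than $p=1/2$. First I would set up the orthonormal Fourier basis adapted to $\bbP_p$: for each coordinate let $\chi_i(\omega)=\frac{\omega_i-p}{\sqrt{p(1-p)}}$, so that $\{\chi_S:=\prod_{i\in S}\chi_i\}_{S\subseteq[n]}$ is an orthonormal basis of $L^2(\bbP_p)$, and write $\ff=\sum_S \widehat\ff(S)\chi_S$. Then $\mathrm{Var}_p(\ff)=\sum_{S\ne\emptyset}\widehat\ff(S)^2$, and a direct computation of the discrete derivative shows that the "$p$-biased" gradient $\partial_i\ff$ (the function whose Fourier expansion keeps only the coefficients $\widehat\ff(S)$ with $i\in S$, times $\chi_{S\setminus i}$) satisfies $\|\partial_i\ff\|_2^2=\sum_{S\ni i}\widehat\ff(S)^2$ and, crucially, $\|\partial_i\ff\|_1$ is comparable to ${\rm Inf}_i[\ff]$ up to a factor controlled by $p(1-p)$ — this is where the $\log\frac1{p(1-p)}$ will ultimately enter. (For the cleanest exposition I would first do the heart of the argument and track the $p$-dependence at the end.)

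The core estimate is the hypercontractive bound: the Bonami–Beckner inequality for the $p$-biased cube states that the noise operator $T_\rho$ is a contraction from $L^{1+\rho^2}$ to $L^2$ for $\rho$ sufficiently small depending on $p$; equivalently, for a function $g=\sum_S \widehat g(S)\chi_S$ supported on levels $|S|\ge 1$ one gets $\sum_S \rho^{2|S|}\widehat g(S)^2 \le \|g\|_{1+\rho^2}^2$. Applying this to $g=\partial_i\ff$ and using $\|\partial_i\ff\|_q^2 \le \|\partial_i\ff\|_1^{2(2-q)/q}\,\|\partial_i\ff\|_2^{2(2q-2)/q}$ (interpolation, since $\partial_i\ff$ is supported where it roughly takes values in $\{-\kappa,0,\kappa\}$ with $\kappa\asymp (p(1-p))^{-1/2}$), one extracts a bound of the form
$$\sum_{S\ni i}\rho^{2|S|}\widehat\ff(S)^2 \ \le\ A_i^{\,\theta(\rho)}\,\Big(\sum_{S\ni i}\widehat\ff(S)^2\Big)^{1-\theta(\rho)},$$
where $A_i$ is proportional to ${\rm Inf}_i[\ff]$ (with a $p(1-p)$ correction) and $\theta(\rho)\to 1$ as $\rho\to 0$. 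Summing over $i$ weights level $|S|$ by $|S|\rho^{2|S|}$, and then one optimizes the choice of $\rho$ (taking $\rho^2$ of order $1/\log(1/{\rm Inf}_i[\ff])$, handled coordinate by coordinate or by a dyadic splitting of the coordinates according to the size of ${\rm Inf}_i[\ff]$) to convert $\sum_{|S|\ge 1}\widehat\ff(S)^2 = \mathrm{Var}_p(\ff)$ into the stated sum $\sum_i {\rm Inf}_i[\ff]/\log(1/{\rm Inf}_i[\ff])$. The $\log\frac1{p(1-p)}$ prefactor appears because the natural comparison between the $L^1$ norm of the $p$-biased gradient and the combinatorial influence, as well as the valid range of $\rho$ in the $p$-biased hypercontractive inequality, each cost a factor polynomial in $\frac1{p(1-p)}$, and these get absorbed into the logarithm after the optimization.

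The main obstacle is the $p$-dependence: the Bonami–Beckner inequality degrades as $p\to 0$ or $p\to 1$ (the optimal hypercontractivity constant for a $p$-biased bit blows up), so one cannot simply quote the $p=1/2$ KKL argument. One option is to prove a quantitatively sharp $p$-biased hypercontractive estimate directly (Oleszkiewicz-type bounds), which is technically the delicate part; the cleaner route, which I would follow, is Talagrand's own device of \emph{not} using hypercontractivity on the $p$-biased cube at all, but instead replacing each Bernoulli($p$) coordinate by a block of fair coins that simulates it (a "tensorization/discretization" reduction to $p=1/2$, as in BKKKL), applying the $1/2$-biased hypercontractive inequality on the enlarged cube, and then bookkeeping how influences and variance transfer back — the number of fair coins needed to approximate Bernoulli($p$) is what produces the $\log\frac1{p(1-p)}$ factor. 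Either way, the rest is the interpolation-plus-optimization calculation above, which is routine once the hypercontractive input is in hand.
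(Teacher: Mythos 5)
Your proposal follows essentially the same route as the paper: the Kahn--Kalai--Linial/Talagrand argument via the Fourier--Walsh expansion, hypercontractivity (Bonami--Beckner) applied to the discrete gradients, and the identification of the $L^q$ norms of $\nabla_i\ff$ with powers of ${\rm Inf}_i[\ff]$. Two points of comparison are worth recording. First, where you optimize a noise parameter $\rho$ coordinate by coordinate (or dyadically), the paper avoids any optimization by writing $\tfrac1{|S|+1}=\int_0^1 t^{|S|}\,dt$, which turns the level-weighted sum $\sum_{S\ni i}\hat\ff(S)^2/|S|$ into $\int_0^1\|T_t\nabla_i\ff\|_2^2\,dt$ and then gives $\int_0^1{\rm Inf}_i[\ff]^{(1-t^2)/(1+t^2)}dt\le 1/\log(1/{\rm Inf}_i[\ff])$ in one line; this is a cleaner packaging of the same estimate and you may prefer it to the dyadic splitting. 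Second, and more substantively, the paper only carries out the case $p=1/2$ and refers to the literature for general $p$, whereas your discussion of the $p$-dependence --- the degradation of $p$-biased hypercontractivity, and the alternative device of simulating a Bernoulli($p$) bit by a block of roughly $\log\tfrac1{p(1-p)}$ fair coins, which is exactly where the prefactor in the statement comes from --- addresses the part of the theorem the paper does not actually prove, and is the correct way to obtain the stated constant. One small simplification: for an increasing boolean $\ff$ the ($p$-biased) derivative takes only two values on each atom, so its $L^q$ norms are computed exactly and your interpolation step is unnecessary, just as in the paper's identity $\|\nabla_i\ff\|_{1+t^2}={\rm Inf}_i[\ff]^{1/(1+t^2)}$.
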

This statement is often used as follows: there must necessarily exist one influence which is larger than $c_p\tfrac{\log n}n{\rm Var}_p(\ff)$ (where $c_p=c\log\tfrac1{p(1-p)}$), which is immediate since there must be one $i$ with 
$$\frac{{\rm Inf}_i[\ff]}{\log (1/{\rm Inf}_i[\ff])}\ge\tfrac{c_p}n {\rm Var}_p(\ff)$$ 
(with $c_p$ maybe changed by a constant multiplicative factor).
\begin{proof}
The proof of this theorem is based on discrete Fourier analysis. 
Let us focus on the case $p=1/2$ which is slightly simpler due to the Bonami-Beckner inequality. We use the Fourier-Walsh expansion of $\ff$
$$\ff:=\sum_{S\subset [n]}\hat{\ff}(S) u_S,$$
where $u_S:=(-1)^{\sum_{i\in S}\omega_i}$ and $\hat{\ff}(S):=2^{-n}\sum_{\omega}\ff(\omega)u_S(\omega)$.
Observe that 
$$\widehat{\nabla_i\ff}(S)=\begin{cases}2\hat{\ff}(S)&\text{ if }i\in S,\\ \ \ \ 0&\text{ otherwise}.\end{cases}$$
Since $\hat\ff(\emptyset)=f(\tfrac12)$, and since by Parseval's inequality, $\bbE_{1/2}[\ff^2]=\sum_{S\subset[n]}\hat\ff(S)^2$, we deduce that 
$${\rm Var}(\ff)=\sum_{\substack{S\subset[n]\\ S\ne \emptyset}}\hat{\ff}(S)^2.$$We deduce that 
$${\rm Var}(\ff)=\sum_{i=1}^n\sum_{S\ni i}\frac{\hat{\ff}(S)^2 }{|S|}\le\sum_{i=1}^n\sum_{\substack{S\subset[n]\\ S\ne \emptyset}}\frac{\widehat{\nabla_i\ff}(S)^2 }{4|S|}\le \sum_{i=1}^n\sum_{\substack{S\subset[n]\\ S\ne \emptyset}}\frac{\widehat{\nabla_i\ff}(S)^2 }{|S|+1},$$
Writing $\tfrac1{|S|+1}=\int_0^1t^{|S|}dt$ and introducing for each $t\ge0$,
$$T_t\widehat{\nabla_i\ff}:=\sum_{S\subset[n]} t^{|S|}\widehat{\nabla_i\ff}(S)u_S,$$
we may write 
$${\rm Var}(\ff)\le \sum_{i=1}^n \int_0^1 \|T_t\widehat{\nabla_i\ff}\|_2^2dt,$$
where $\|\ff\|_\alpha^\alpha=2^{-n}\sum_\omega|\ff(\omega)|^\alpha$.
We now use the Bonami-Beckner inequality  \cite{Bek75,Bon70}, which states that 
$$\|T_t\widehat{\nabla_i\ff}\|_2\le \|\widehat{\nabla_i\ff}\|_{1+t^2}.$$ Since $\widehat{\nabla_i\ff}$ takes values in $\{-1,0,1\}$, we also have that $\|\widehat{\nabla_i\ff}\|_{1+t^2}={\rm Inf}_i[\ff]^{1/(1+t^2)}$. We deduce that 
\begin{equation}{\rm Var}(\ff)\le \sum_{i=1}^n {\rm Inf}_i[\ff]\cdot \int_0^1{\rm Inf}_i[\ff]^{\tfrac{1-t^2}{1+t^2}}dt.\label{eq:abcde}\end{equation}
Now, $\tfrac{1-t^2}{1+t^2}\ge 1-t$ and ${\rm Inf}_i[\ff]\le1$, so making the change of variables $s(t)=1-t$  gives
$$\int_0^1{\rm Inf}_i[\ff]^{\tfrac{1-t^2}{1+t^2}}dt\le\int_0^1{\rm Inf}_i[\ff]^{1-t}dt\le \int_0^1{\rm Inf}_i[\ff]^{s}ds\le \frac1{\log (1/{\rm Inf}_i[\ff])}.$$
Plugging this inequality in \eqref{eq:abcde} gives the result.
\end{proof}
The whole gain in the previous proof comes from the Bonami-Beckner inequality. This inequality is not obvious to prove, and we refer to \cite{Bek75,Bon70} for details.

Note that this inequality implies that $f'(p)$ is much larger than ${\rm Var}_p(\ff)$ as soon as all the influences are small (which can be seen counterintuitive since the derivative is the sum of the influences). More precisely, if all the influences are smaller than $\ep$, then $f'(p)$ is larger than $c_p\log(1/\ep){\rm Var}_p(\ff)$.

In general, it may be difficult to prove that all influences are small, but there is a particularly efficient way of using
Theorem~\ref{maximum_influence} when $A$ is invariant under a group acting transitively on $[n]$. 

\begin{theorem}\label{symmetric_sharp_threshold}
There exists $c>0$ such that for any $p\in[0,1]$ and $n\in\bbN$, the following holds. For an increasing boolean function $\ff:\{0,1\}^n\rightarrow\{0,1\}$ which is symmetric\footnote{Meaning that $f\circ\sigma=f$ for every $\sigma\in\mathfrak S$.} under a group $\mathfrak S$ acting transitively on $[n]$, 
$$f'(p)\geq c \log n\, {\rm Var}_p(\ff).$$ 
\end{theorem}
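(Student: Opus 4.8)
The plan is to leverage the transitivity of $\mathfrak S$ to reduce Theorem~\ref{maximum_influence} to a statement about a single "collective'' influence, and then close with an elementary dichotomy. First I would equalise the influences: given $i,j\in[n]$, choose $\sigma\in\mathfrak S$ with $\sigma(i)=j$; since $\sigma$ merely permutes coordinates it preserves $\bbP_p$, and since $\ff\circ\sigma=\ff$ one has $\nabla_i\ff=(\nabla_j\ff)\circ\sigma$, so ${\rm Inf}_i[\ff]={\rm Inf}_j[\ff]$. Write $a$ for this common value. Then Lemma~\ref{lem:dif} gives $f'(p)=na$, while Theorem~\ref{maximum_influence} becomes
$${\rm Var}_p(\ff)\le c\log\tfrac1{p(1-p)}\cdot\frac{na}{\log(1/a)}.$$
Combining, $f'(p)=na\ge\dfrac{\log(1/a)}{c\log(1/(p(1-p)))}\,{\rm Var}_p(\ff)$, so it suffices to bound $\log(1/a)$ below by a multiple of $\log n$ (modulo the $p$-dependent factor, discussed below).

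Second, I would run a dichotomy on the size of $a=f'(p)/n$. If $a>n^{-1/2}$, then $f'(p)=na>\sqrt n$; since ${\rm Var}_p(\ff)\le\tfrac14$ and $\sqrt n\ge c_0\log n$ for all $n\ge 1$ and a universal $c_0>0$, this already gives $f'(p)>\sqrt n\ge c\log n\,{\rm Var}_p(\ff)$ once $c$ is small, and this case is uniform in $p$. If instead $a\le n^{-1/2}$, then $\log(1/a)\ge\tfrac12\log n$, and the inequality above yields
$$f'(p)\ge\frac{\log n}{2c\log(1/(p(1-p)))}\,{\rm Var}_p(\ff),$$
which is the desired bound whenever $p$ ranges over a fixed compact subinterval of $(0,1)$, so that $\log\tfrac1{p(1-p)}$ is bounded.

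The hard part is uniformity in $p$ as $p\to 0$ or $p\to 1$ — notably the regime $p\asymp 1/n$, where Talagrand's factor $\log\tfrac1{p(1-p)}\asymp\log n$ exactly cancels the gain, so Theorem~\ref{maximum_influence} cannot be used black-box. There I would argue directly from monotonicity. By the $p\leftrightarrow 1-p$ symmetry assume $p\le\tfrac12$. Two elementary facts for increasing $\ff$ are available: ${\rm Var}_p(\ff)=f(p)(1-f(p))\le\min\{f(p),1-f(p)\}$; and, since $\ff\equiv 1$ on $A_i\cap\{\omega_i=1\}$ while $\ff\equiv 0$ on $A_i\cap\{\omega_i=0\}$ and $\omega_i$ is independent of $A_i$ (exactly as in the proof of Lemma~\ref{lem:dif}), one gets $a\le\min\{f(p)/p,(1-f(p))/(1-p)\}$; moreover $f(p)\le 1-(1-p)^n\le np$. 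Splitting according to whether $f(p)\le\tfrac12$ (so ${\rm Var}_p(\ff)\asymp f(p)$ and one controls $f'(p)/f(p)=na/f(p)$) or $f(p)>\tfrac12$ (so necessarily $p\ge\tfrac1{2n}$, hence $\log\tfrac1{p(1-p)}=O(\log n)$, and ${\rm Var}_p(\ff)\asymp 1-f(p)$), and carefully tracking these inequalities, one recovers $f'(p)\ge c\log n\,{\rm Var}_p(\ff)$ with a constant independent of $p$. This last piece of bookkeeping — in effect the Friedgut--Kalai argument — is the only genuinely delicate point; the symmetrisation and the appeal to Talagrand's inequality are immediate.
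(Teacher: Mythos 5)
Your first two steps reproduce the paper's proof almost verbatim: transitivity of $\mathfrak S$ forces all influences to equal a common value $a$, Lemma~\ref{lem:dif} gives $f'(p)=na$, and one dichotomizes on the size of $a$, handling the large-$a$ case by the trivial bound $f'(p)=na\ge\sqrt n\ge 4c_0\log n\,{\rm Var}_p(\ff)$ and the small-$a$ case by Theorem~\ref{maximum_influence}. The paper cuts at $a=\tfrac{\log n}{n}$ where you cut at $n^{-1/2}$; both choices work and yield the same conclusion. So the heart of your argument is correct and is the paper's argument.

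Where you go beyond the paper is the uniformity in $p$, and here your diagnosis is right but your repair is not complete. Talagrand's inequality carries the factor $\log\tfrac1{p(1-p)}$, so this route only yields $f'(p)\ge \frac{c\log n}{\log(1/(p(1-p)))}\,{\rm Var}_p(\ff)$; the paper's own proof ends with a $p$-dependent constant ``$c_p$'' and thus does not literally establish the uniform $c$ announced in the statement either (the intended fix is the remark that Theorem~\ref{maximum_influence} is ``essentially equivalent'' to the BKKKL inequality, whose form on $[0,1]^n$, pulled back to $\{0,1\}^n$ via $\omega_i=\mathbbm 1_{x_i\le p}$, carries no $p$-dependent factor). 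Your sketched patch for extreme $p$ does not close the gap as written: the inequalities you invoke ($a\le f(p)/p$, $a\le(1-f(p))/(1-p)$, $f(p)\le np$) are \emph{upper} bounds on $a$, whereas you need a \emph{lower} bound on $na/{\rm Var}_p(\ff)$. They do dispose of $p\le\tfrac1{2n}$, since then \eqref{eq:ok} gives $f'(p)\ge f(p)(1-np)/(p(1-p))\ge n\,{\rm Var}_p(\ff)$ directly (assuming $\ff\not\equiv1$), but in the window $\tfrac1n\lesssim p\lesssim n^{-\ep}$ — precisely the regime you flag — they give nothing, and Talagrand only returns $f'(p)\ge c\,{\rm Var}_p(\ff)$ with the $\log n$ gain cancelled. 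To get the uniform constant you should replace Talagrand by BKKKL in that regime (this is the Friedgut--Kalai route); if you are content with the constant the paper actually obtains, your first two steps already suffice and the third is unnecessary.
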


\begin{proof}
Since the boolean function is symmetric under a group $\mathfrak S$ acting transitively on $[n]$, we have that for each $i$ and $j$, $\ff=\ff\circ\sigma$ for any $\sigma\in\mathfrak S$ satisfying $\sigma(i)=j$. In particular, we deduce that 
$${\rm Inf}_i(\ff)={\rm Inf}_i(\ff\circ\sigma)={\rm Inf}_j(\ff),$$
and therefore all the influences are equal.  
Now, we are facing two cases:
\begin{itemize}
\item if ${\rm Inf}_i(\ff)\ge \frac{\log n}{n}$ for all $i$, then $$f'(p)=\sum_{i}{\rm Inf}_i(\ff)\ge \log n\ge \log n{\rm Var}_p(\ff).$$
\item if ${\rm Inf}_i(\ff)\le \frac{\log n}{n}$ for all $i$, then $\log(1/{\rm Inf}_i(\ff))\ge \log n-\log \log n$ for all $i$ and Theorem~\ref{maximum_influence} implies
$$f'(p)\ge c_p(\log n -\log \log n) {\rm Var}_p(\ff).$$
By modifying $c_p$ and choosing it small enough, we obtain the result.
\end{itemize}

\end{proof}
This theorem is already quite powerful since it guarantees that one may take $C=c\log n$. In particular, this theorem implies that every monotone graph property undergoes a sharp threshold (see Example 3 of the previous section). Indeed, by definition a graph property $A$ is invariant under graph isomorphism. In particular, it is invariant under relabeling of the vertices, and therefore $\mathbbm 1_A$ is invariant under a group $\mathfrak S$ acting transitively on the vertices of the graph.

\subsection{The O'Donnell-Schramm-Saks-Servedio inequality}

We now present another inequality enabling to derive bounds like \eqref{eq:fund}. This one is based on algorithms and was introduced to solve a conjecture of Yao \cite{yao1977probabilistic}.

Informally speaking, an algorithm associated with a boolean function $\ff$ takes $\omega\in\{0,1\}^n$ as an input, and reveals algorithmically the value of $\omega$ at different coordinates one by one until the value of $\ff(\omega)$ is determined. At each step, which coordinate will be revealed next depends on the values of $\omega$ revealed so far. The algorithm stops as soon as the value of $\ff$ is the same no matter the values of $\omega$ on the remaining coordinates. Then, the question is often to determine how many bits of information must be revealed before the algorithm stops (this quantity is sometimes referred to as the computational complexity of the boolean function).

Formally, an algorithm is defined as follows. 
For a $n$-tuple $x=(x_1,\dots,x_n)$ and $t\le n$, write $x_{[t]}=(x_1,\dots,x_t)$ and $\omega_{x_{[t]}}=(\omega_{x_1},\dots,\omega_{x_t})$. An {\em algorithm} $\TT=(i_1,\psi_t,t<n)$ takes $\omega\in\{0,1\}^n$ as an input and gives back an ordered sequence $(i_1,\dots,i_n)$ constructed inductively as follows: for any $2\le t\le n$,
$$i_t=\psi_t(i_{[t-1]},\omega_{i_{[t-1]}})\in [n]\setminus \{i_1,\dots,i_{t-1}\},$$
where $\psi_t$ is a function interpreted as the decision rule at time $t$ ($\psi_t$ takes the location and the value of the bits for the first $t-1$ steps of the induction, and decides the next bit to query). Note that the first coordinate $i_1$ is deterministic.
For $\ff:\{0,1\}^n\rightarrow \bbR$, define 
\begin{equation*}\label{eq:ddd}\tau(\omega)=\tau_{\ff,\bf T}(\omega):=\min\big\{t\ge1:\forall x\in\{0,1\}^E,\quad x_{i_{[t]}}=\omega_{i_{[t]}}\Longrightarrow f(x)=f(\omega)\big\}.\end{equation*}
\begin{remark}In computer science, an algorithm is usually associated directly to a boolean function $\ff$ and defined as a rooted directed tree in which each internal nodes are labeled by elements of $[n]$, leaves by possible outputs of $\ff(\omega)$, and edges are in correspondence with the possible values of the bits at vertices (see \cite{OSSS} for a formal definition). In particular, the algorithms are usually defined up to $\tau$, and not later on. \end{remark}

The OSSS inequality, originally introduced by O'Donnell, Saks, Schramm and Servedio in \cite{OSSS} as a step toward a conjecture of Yao \cite{yao1977probabilistic}, relates the variance of a boolean function to the influence and the computational complexity of an algorithm for this function. 
\begin{theorem}[OSSS inequality \cite{OSSS}]
\label{thm:OSSS}
Consider $p\in[0,1]$ and $n\in\bbN$. Fix an increasing boolean function $\ff:\{0,1\}^n\longrightarrow \{0,1\}$ and an algorithm $\TT$. We have
\begin{equation}
    \label{eq:OSSS}
 \mathrm{Var}_p(\ff)~\le~   p(1-p) \sum_{i=1}^n  \delta_i(\TT) \, {\rm Inf}_i(\ff),
  \end{equation}
  where 
$
\delta_i(\TT)=\delta_i(\ff,\TT):=\bbP_p\big[\exists t\le \tau(\omega)\::\:i_t=i\big]
$
is called the {\em revealment} of $\ff$ for the algorithm $\TT$ and the bit $i$.
\end{theorem}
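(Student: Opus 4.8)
The plan is to compare, term by term, the variance of $\ff$ with a martingale-type decomposition induced by the algorithm $\TT$. The key idea is that the algorithm reveals the bits $\omega_{i_1},\omega_{i_2},\dots$ one at a time, and after $\tau(\omega)$ steps the value $\ff(\omega)$ is determined. So if we let $\calF_t$ be the $\sigma$-algebra generated by the first $t$ revealed pairs $(i_1,\omega_{i_1}),\dots,(i_t,\omega_{i_t})$, then $\bbE_p[\ff\mid\calF_n]=\ff$ (indeed $\bbE_p[\ff\mid\calF_{\tau}]=\ff$), and telescoping gives
$$\ff-\bbE_p[\ff]=\sum_{t=1}^n\big(\bbE_p[\ff\mid\calF_t]-\bbE_p[\ff\mid\calF_{t-1}]\big),$$
with $\calF_0$ trivial. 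Since these martingale increments are orthogonal in $L^2(\bbP_p)$, we obtain
$$\mathrm{Var}_p(\ff)=\sum_{t=1}^n\bbE_p\big[(\bbE_p[\ff\mid\calF_t]-\bbE_p[\ff\mid\calF_{t-1}])^2\big].$$

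Next I would bound each increment. Conditionally on $\calF_{t-1}$, the next queried coordinate $i_t$ is deterministic (it equals $\psi_t$ evaluated at the revealed data), and $\omega_{i_t}$ is an independent $\mathrm{Bernoulli}(p)$ bit. The increment $\bbE_p[\ff\mid\calF_t]-\bbE_p[\ff\mid\calF_{t-1}]$ is therefore the fluctuation of a function of this single Bernoulli variable around its mean, so its conditional second moment equals $p(1-p)$ times the square of the conditional ``discrete derivative'' in direction $i_t$. Writing $g_{t}:=\bbE_p[\ff\mid\calF_t]$ and using $\mathrm{Var}(\mathrm{Bernoulli}(p))=p(1-p)$, one gets
$$\bbE_p\big[(g_t-g_{t-1})^2\mid\calF_{t-1}\big]=p(1-p)\,\big(D_{i_t}g_t\big)^2,$$
where $D_j$ is the appropriate martingale-difference operator in coordinate $j$; here one should be slightly careful with exactly which conditional expectation appears, but the structure is that of a one-step variance in a product space. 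Summing over $t$, taking expectations, and then using $|D_j g_t|\le \bbE_p[|\nabla_j\ff|\mid\calF_t]$ together with Jensen (the conditional expectation of $\ff$ cannot oscillate in coordinate $j$ more than $\ff$ itself does) yields
$$\mathrm{Var}_p(\ff)\le p(1-p)\sum_{t=1}^n\bbE_p\big[\mathbbm1_{t\le\tau(\omega)}\,\bbE_p[|\nabla_{i_t}\ff|\mid\calF_t]^2\big].$$

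Finally I would regroup the sum over $t$ by the identity of the revealed coordinate. For a fixed $i$, the event $\{i_t=i,\ t\le\tau\}$ happens for at most one value of $t$ (each coordinate is queried at most once), and the union over $t$ of these events is exactly the event $\{\exists t\le\tau:i_t=i\}$ whose probability is the revealment $\delta_i(\TT)$. Using $\bbE_p[|\nabla_i\ff|\mid\calF_t]\le 1$ to linearize one of the two factors and then taking expectations recovers
$$\mathrm{Var}_p(\ff)\le p(1-p)\sum_{i=1}^n\bbP_p\big[\exists t\le\tau(\omega):i_t=i\big]\;\bbE_p\big[|\nabla_i\ff|\big]=p(1-p)\sum_{i=1}^n\delta_i(\TT)\,{\rm Inf}_i(\ff),$$
which is \eqref{eq:OSSS}. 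The main obstacle, and the step demanding the most care, is the bookkeeping in the martingale step: one must set up the filtration so that at each time the newly revealed bit is genuinely an independent Bernoulli (this uses that $i_t$ is $\calF_{t-1}$-measurable), and one must verify the inequality $\bbE_p[(g_t-g_{t-1})^2\mid\calF_{t-1}]\le p(1-p)\,\bbE_p[|\nabla_{i_t}\ff|\mid\calF_{t}]^2$ rather than an equality with a less convenient quantity — this is where monotonicity of $\ff$ is used, exactly as in the proof of Lemma~\ref{lem:dif}, to turn the signed derivative into the influence $|\nabla_i\ff|$. Everything else is orthogonality of martingale increments and the combinatorial fact that an algorithm queries each coordinate at most once.
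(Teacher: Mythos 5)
Your martingale set-up is sound as far as it goes: with $\calF_t$ generated by $(i_1,\omega_{i_1}),\dots,(i_t,\omega_{i_t})$ and $g_t=\bbE_p[\ff\mid\calF_t]$, orthogonality of increments gives $\mathrm{Var}_p(\ff)=\sum_t\bbE_p[(g_t-g_{t-1})^2]$, and since $i_t$ is $\calF_{t-1}$-measurable while $\omega_{i_t}$ is a fresh Bernoulli bit, one indeed gets $\bbE_p[(g_t-g_{t-1})^2\mid\calF_{t-1}]=p(1-p)\big(\bbE_p[|\nabla_{i_t}\ff|\mid\calF_{t-1}]\big)^2$. Bounding the square by the first power and using the tower property, this route proves the (true) inequality $\mathrm{Var}_p(\ff)\le p(1-p)\sum_i\bbE_p\big[\mathbbm1_{B_i}\,|\nabla_i\ff|\big]$ with $B_i=\{\exists t\le\tau:i_t=i\}$. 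The gap is in your last step: to reach \eqref{eq:OSSS} you need $\bbE_p[\mathbbm1_{B_i}|\nabla_i\ff|]\le\bbP_p[B_i]\cdot{\rm Inf}_i[\ff]$, and ``taking expectations'' does not deliver this product, because the event that the algorithm reveals $i$ before time $\tau$ and the event that $i$ is pivotal are in general positively correlated (a good algorithm hunts for pivotal bits). Concretely, take $n=2$, $p=\tfrac12$, $\ff=\omega_1\wedge\omega_2$, and the algorithm querying bit $1$ first: then $B_2=\{\omega_1=1\}=\{|\nabla_2\ff|=1\}$, so $\bbE_p[\mathbbm1_{B_2}|\nabla_2\ff|]=\tfrac12$ while $\delta_2(\TT)\,{\rm Inf}_2[\ff]=\tfrac14$. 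The term-by-term factorization you invoke is therefore false, and no linearization or Cauchy--Schwarz manipulation of the square will extract it from a single run of the algorithm.

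This decoupling of ``revealed'' from ``pivotal'' is precisely the idea your proposal is missing, and it is what the paper's two-configuration construction achieves. The paper runs the algorithm on $\omega$ but compares $\ff$ along interpolated configurations $\omega^t$ built from $\omega$ and an independent copy $\tilde\omega$: conditionally on the algorithm's history and on $\{t\le\tau,\,i_t=i\}$, both $\omega^{t-1}$ and $\omega^t$ are \emph{unbiased} iid Bernoulli configurations differing (possibly) only at $i$, because the coordinates already examined by the algorithm have been replaced by the fresh bits of $\tilde\omega$. Hence the conditional expectation of $|\ff(\omega^t)-\ff(\omega^{t-1})|$ equals $2p(1-p)\,{\rm Inf}_i[\ff]$ with the \emph{unconditional} influence, and the revealment then factors out cleanly. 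If you wish to keep a martingale flavour you must build this resampling into the filtration; without it the argument stalls at the weaker inequality displayed above.
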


\newcommand{\e}{\mathbf i}
\begin{proof}
Consider two independent sequences $\omega$ and $\tilde\omega$ of iid Bernoulli random variables of parameter $p$. Write $\mathbb P$ for the joint measure of these variables (and $\mathbb E$ for its expectation). 
Construct $\e$ by setting $\e_1=i_1$ and for $t\ge1$,  
$\mathbf{i}_{t+1}:=\psi_t(\mathbf{i}_{[t]},\omega_{\mathbf{i}_{[t]}})$. Note that the construction of $\mathbf i$ relies solely on $\omega$ and does not involve $\tilde\omega$. Define $$\tau:=\min\{t\ge1:\forall x\in\{0,1\}^E,x_{\mathbf{i}_{[t]}}=\omega_{\mathbf{i}_{[t]}}\Rightarrow \ff(x)=\ff(\omega)\}.$$ Finally, for $0\le t\le n$, define $$\omega^t:=(\tilde\omega_{\mathbf{i}_{1}},\dots,\tilde\omega_{\mathbf{i}_{t}},\omega_{\mathbf{i}_{t+1}},\dots,\omega_{\mathbf{i}_{\tau-1}},\tilde\omega_{\mathbf{i}_{\tau}},\tilde\omega_{\mathbf{i}_{\tau+1}},\dots,\tilde\omega_{\mathbf{i}_{n}}),$$where it is understood that the $n$-tuple under parentheses is equal to $\tilde\omega$ if $t\ge \tau$. 

Now, observe that $\ff$ takes values in $\{0,1\}$, therefore
$${\rm Var}_p(\ff)=\bbE_p[(\ff-f(p))^2]\le \tfrac12\bbE_p[|\ff-f(p)|].$$ 
Since $\omega^0$ and $\omega$ coincide on $\mathbf i_{[\tau]}$, we deduce that $\ff(\omega^{0})=\ff(\omega)$. Also, $\omega^n=\tilde\omega$ so that $\ff(\omega^{n})=\ff(\tilde\omega)$. As a consequence,  
conditioning on $\omega$ gives
\begin{equation*}\label{eq:a}2{\rm Var}_p(\ff)\le \bbE_p[|\ff-f(p)|]=\bbE\Big[\Big|\,\bbE[\ff(\omega^{0})|\omega]-\bbE[\ff(\omega^{n})|\omega]\,\Big|\Big]\le\mathbb E[|\ff(\omega^{0})-\ff(\omega^{n})|].\end{equation*}
Since $\omega^t=\omega^{t-1}$ for any $t>\tau$, the right-hand side is smaller than or equal to
\begin{align*}\sum_{t=1}^{n}\mathbb E[|\ff(\omega^t)-\ff(\omega^{t-1})|]&=\sum_{i=1}^n \sum_{t=1}^{n}\mathbb E\Big[\mathbb E\big[\,|\ff(\omega^t)-\ff(\omega^{t-1})|\,\big|\,\omega_{\mathbf{i}_{[t-1]}}\big]\,\mathbbm1_{t\le \tau,\mathbf{i}_t=i}\Big].
\end{align*}
We now use the key property of the construction of the $\omega^t$. Conditionally on $\omega_{\mathbf{i}_{[t-1]}}$ and $\{t\le \tau,\mathbf{i}_t=i\}$, both $\omega^t$ and $\omega^{t-1}$ are independent sequences of iid Bernoulli random variables since both involve only $\tilde\omega$ on edges in $\mathbf{i}_{[t-1]}$. Furthermore, they differ (possibly) at $i$ since $\omega^t_i=\tilde\omega_i$ and $\omega^{t-1}_i=\omega_i$. We insist on the fact that this is the fundamental property that we were looking for when defining $\omega^t$. We deduce that 
\begin{equation*}\mathbb E\big[\,|\ff(\omega^t)-\ff(\omega^{t-1})|\,\big|\,\omega_{\mathbf{i}_{[t-1]}}\big]~=2p(1-p)\mathbb E_p[|\nabla_i\ff(\omega)|]=2p(1-p){\rm Inf}_i[\ff].\end{equation*}
Recalling that $\sum_{t=1}^{n}\mathbb P[t\le \tau,\mathbf{i}_t=i]=\delta_i(\TT)$ concludes the proof.
\end{proof}

\section{Applications to Bernoulli percolation on $\bbZ^d$}

We now focus on two applications to Bernoulli percolation. Consider the $d$-dimensional lattice with vertex set $\bbZ^d$ and edge set $\bbE^d$ given by pairs $\{x,y\}\subset\bbZ^d$ with $\|x-y\|=1$. We do not work on boolean functions defined on $\{0,1\}^n$ anymore but rather on $\{0,1\}^E$ with $E\subset \bbE^d$ being a finite set. In particular, we will use the notation $e$ instead of $i$ to refer to elements of $E$ (which are all edges of $\bbZ^d$).  Note that the theorems proved in the previous section are also valid in this context.

Set $\Lambda_n=[-n,n]^d$ and $\partial\Lambda_n:=\Lambda_n\setminus\Lambda_{n-1}$. Also, set $X\leftrightarrow Y$ if there exists a path in $\omega$ from $X$ to $Y$. Finally, we write $0\leftrightarrow\infty$ for the event that 0 is in an infinite connected component.

\subsection{Critical point of Bernoulli percolation on $\bbZ^2$}
In this section, we discuss the proof of the following theorem.
\begin{theorem}[Kesten \cite{Kes80}]\label{thm:main}
The critical point of Bernoulli percolation on the square lattice is equal to $1/2$.
\end{theorem}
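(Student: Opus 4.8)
The plan is to deduce Kesten's theorem from the self-duality of $\bbZ^2$ together with the OSSS inequality (Theorem~\ref{thm:OSSS}), linking the two through the behaviour of crossing probabilities of large rectangles. \emph{Duality input.} Let $\mathcal H_n$ be the event that the rectangle $[0,n+1]\times[0,n]$ is crossed from left to right by an open path, and set $f_n(p):=\bbP_p[\mathcal H_n]$. Bond percolation on $\bbZ^2$ is self-dual: declaring the dual edge that crosses a primal edge to be open exactly when the primal edge is closed turns $\bbP_p$ into $\bbP_{1-p}$ on a translate of $\bbZ^2$, and the complement of $\mathcal H_n$ is precisely the event that the dual configuration has a top--bottom crossing of a rectangle isometric to $[0,n+1]\times[0,n]$. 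At $p=1/2$ this forces
\[f_n(1/2)=1/2\qquad\text{for every }n,\]
and this is the only point at which the value $1/2$ enters.

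\emph{Sharp-threshold input.} I would feed a carefully chosen decision tree for $\mathcal H_n$ (equivalently, for $\{0\leftrightarrow\partial\Lambda_n\}$) into \eqref{eq:OSSS}. Exploring the open cluster of a single fixed point is useless, since its revealment near that point is of order $1$; the algorithm must instead \emph{average} the exploration over its starting point on the side of the rectangle, after which a geometric estimate bounds every revealment by $O\big(S_n(p)/n\big)$ with $S_n(p):=\sum_{k<n}\bbP_p[0\leftrightarrow\partial\Lambda_k]$. Since ${\rm Var}_p(\mathbbm{1}_{\mathcal H_n})=f_n(p)(1-f_n(p))$, the OSSS inequality then yields
\[\Big(\log\tfrac{f_n(p)}{1-f_n(p)}\Big)'=\frac{f_n'(p)}{f_n(p)(1-f_n(p))}\ \geq\ \frac{c\,n}{S_n(p)},\]
whose right-hand side blows up, uniformly on compact subintervals, whenever $\bbP_p[0\leftrightarrow\partial\Lambda_k]\to 0$, i.e. whenever $\bbP_p[0\leftrightarrow\infty]=0$, because then $S_n(p)=o(n)$. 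Integrating this inequality as in the argument following \eqref{eq:fund} — now in the form that tolerates an $n$-dependent constant — produces the usual dichotomy: for every $p$, either $\bbP_p[0\leftrightarrow\partial\Lambda_n]$ decays exponentially, or $\bbP_p[0\leftrightarrow\infty]>0$; the two regimes meet exactly at $p_c$, so in particular every $p<p_c$ lies in the exponential-decay regime.

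\emph{Conclusion by contradiction.} If $p_c>1/2$, then $1/2<p_c$ is in the exponential-decay regime, so $\bbP_{1/2}[0\leftrightarrow\partial\Lambda_n]\leq Ce^{-cn}$; a union bound over the $n+1$ starting points on the left side of the rectangle gives $f_n(1/2)\leq (n+1)Ce^{-cn}\to 0$, contradicting $f_n(1/2)=1/2$. If $p_c<1/2$, then $\bbP_{1/2}[0\leftrightarrow\infty]>0$, and the classical fact that in two dimensions a positive percolation probability forces crossing probabilities of large boxes to tend to $1$ (via uniqueness of the infinite cluster; alternatively Harris's theorem $\bbP_{1/2}[0\leftrightarrow\infty]=0$ may be quoted directly) gives $f_n(1/2)\to 1$, again contradicting $f_n(1/2)=1/2$. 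Hence $p_c=1/2$.

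The step I expect to be the real obstacle is the revealment estimate: obtaining a small bound on $\delta_e(\TT)$ simultaneously for \emph{all} edges forces one to design the right family of explorations, average over it, and control the geometry of the revealed set — this is exactly where the strength of the method lives. A secondary, purely classical, ingredient is the implication ``$\bbP_p[0\leftrightarrow\infty]>0\Rightarrow$ box-crossings $\to 1$'' in two dimensions. An alternative route bypasses the revealment bookkeeping altogether: on the torus $(\bbZ/n\bbZ)^2$, the increasing event that there exists a non-contractible open circuit is invariant under a group acting transitively on the edges, so Theorem~\ref{symmetric_sharp_threshold} applies verbatim and gives the cleaner inequality $f_n'(p)\geq c\log n\,f_n(p)(1-f_n(p))$; the price is that one must then transfer crossing estimates between the torus and the plane and recover the $p=1/2$ identity via a torus form of duality.
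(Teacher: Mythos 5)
Your outline is correct, but it is a genuinely different route from the paper's. For Theorem~\ref{thm:main} the paper never touches the OSSS inequality: it proves the RSW estimate (Theorem~\ref{thm:RSW}) at $p=1/2$, deduces polynomial decay of the one-arm probability (Corollary~\ref{cor:aa}) --- which already yields $p_c\ge 1/2$ with no appeal to uniqueness of the infinite cluster --- and then reuses that polynomial decay to show that every edge has small influence on the crossing event, so that the \emph{symmetric} sharp-threshold inequality (Theorem~\ref{symmetric_sharp_threshold}, i.e.\ Talagrand's Theorem~\ref{maximum_influence}) gives $\bbP_p[\calH(2n,n)]\ge 1-\beta^{-1}n^{-\beta}$ for $p>1/2$, whence percolation by Borel--Cantelli. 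You instead import the sharpness machinery of Section~3.2 (Theorem~\ref{thm:OSSS}, Lemma~\ref{cor:OSSS}, Lemma~\ref{lem:technical}) to get exponential decay for all $p<p_c$, and observe that this is incompatible with $f_n(1/2)=1/2$; this is essentially the argument of \cite{DumRaoTas16}, which the paper cites as an alternative proof. What you gain is that RSW is not needed for the hard direction $p_c\le 1/2$. Two points to tighten: (i) the dichotomy is \emph{not} ``for every $p$, exponential decay or $\theta(p)>0$'' --- that disjunction fails exactly at $p=p_c$, which on $\bbZ^2$ is the point at stake; what Lemma~\ref{lem:technical} gives, and what you actually use, is exponential decay for $p<p_c$ and $\theta>0$ for $p>p_c$. (ii) To invoke Lemma~\ref{lem:technical} you need the inequality in the closed form $\theta_n'\ge \tfrac{n}{S_n}\theta_n$ with $S_n=\sum_{k<n}\theta_k$, i.e.\ the OSSS step should be run for $\{0\leftrightarrow\partial\Lambda_n\}$ as in Lemma~\ref{cor:OSSS}; your displayed inequality, with the crossing probability $f_n$ on the left and $S_n$ on the right, does not plug into that lemma as written.

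The one place where your proof does not stand on its own is the case $p_c<1/2$, i.e.\ the direction $p_c\ge 1/2$. There you invoke either Harris's theorem or the implication ``$\theta(p)>0\Rightarrow$ box-crossing probabilities tend to $1$''. Both are true, but neither follows from anything you have set up: the latter rests on uniqueness of the infinite cluster (Burton--Keane / Aizenman--Kesten--Newman) via Zhang's construction, a substantial theorem in its own right, and quoting Harris outright outsources half of Kesten's theorem \cite{Kes80}. The paper disposes of this half in a few lines from RSW alone: Corollary~\ref{cor:aa} gives $\bbP_{1/2}[0\leftrightarrow\partial\Lambda_n]\le n^{-\alpha}$ by combining crossings of dual rectangles in disjoint dyadic annuli, hence $\theta(1/2)=0$. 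So either include Zhang's argument (and cite uniqueness), or reinstate the RSW input for this half; the torus variant you sketch is a legitimate third option but carries its own torus-to-plane transfer costs.
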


We present a method initiated first by Russo \cite{Rus82}. It was later developed further by Bollob\`as and Riordan \cite{BolRio06,BolRio06b,BolRioc}. It is based on the existence of a sharp threshold for so-called crossing probabilities. For two integers $n$ and $m$, define the rectangle $R(n,m):=[0,n]\times[0,m]$. Consider the event $\calH(n,m)$  to be events that the configuration $\omega$ contains a path in $R(n,m)$ from the left side to the right side\footnote{The left side is $\{0\}\times[0,m]$ and the right side $\{n\}\times[0,m]$. We take this opportunity to also define the bottom side $[0,n]\times\{0\}$ and the top side $[0,n]\times\{m\}$ for future reference.} of $R(n,m)$. In this case, we say that $R(n,m)$ is crossed horizontally. Similarly, one defines $\calV(n,m)$ to be the event that the configuration $\omega$ contains a path in $R(n,m)$ from the bottom to the top of $R(n,m)$. In this case, we say that $R(n,m)$ is crossed vertically.

Let us start by a simple observation.
\begin{proposition}
We have $\bbP_{1/2}[\calH(n-1,n)]=\tfrac12$ for all $n$.
\end{proposition}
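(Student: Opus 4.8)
The plan is to exploit the self-duality of the square lattice $\bbZ^2$ together with the fact that at $p=1/2$ the measure is invariant under swapping open and closed edges. The key combinatorial fact is the following topological dichotomy: in the rectangle $R(n-1,n)$, exactly one of the following two events occurs — either there is an open horizontal crossing of $R(n-1,n)$, or there is a closed vertical crossing of the dual rectangle. More precisely, if one introduces the dual lattice $(\bbZ^2)^*=\bbZ^2+(\tfrac12,\tfrac12)$ and declares a dual edge open iff the primal edge it crosses is closed, then the failure of a horizontal crossing of $R(n-1,n)$ in the primal configuration is equivalent to the existence of a vertical crossing by closed dual edges of the appropriately positioned dual rectangle of dimensions $n\times(n-1)$ (after a $90^\circ$ rotation this is a horizontal-type crossing of an $(n-1)\times n$ box). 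This is a standard planar-duality statement; I would cite it or sketch the argument via a connectivity/blocking argument on the planar graph.

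Granting this, write $\calH=\calH(n-1,n)$ for the event of an open horizontal crossing, and let $\calV^*$ be the event of a closed vertical crossing of the dual rectangle. The dichotomy says $\mathbbm 1_{\calH}+\mathbbm 1_{\calV^*}=1$ almost surely, hence $\bbP_{1/2}[\calH]+\bbP_{1/2}[\calV^*]=1$. Next I would observe that under $\bbP_{1/2}$, the map $\omega\mapsto 1-\omega$ (flipping every bit) is measure-preserving, since each coordinate is Bernoulli$(1/2)$ and the coordinates are independent; under this map, "closed dual edges" become "open dual edges", so $\bbP_{1/2}[\calV^*]$ equals the probability of an open vertical crossing of the dual rectangle. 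Finally, the dual rectangle of dimensions $n\times(n-1)$, crossed in the "vertical" (the long, length-$n$) direction, is — up to the lattice isomorphism given by a rotation by $90^\circ$ composed with the half-integer translation identifying $(\bbZ^2)^*$ with $\bbZ^2$ — exactly a copy of $R(n-1,n)$ crossed horizontally. Since Bernoulli percolation is invariant under lattice automorphisms, this probability equals $\bbP_{1/2}[\calH(n-1,n)]=\bbP_{1/2}[\calH]$. Combining, $2\bbP_{1/2}[\calH]=1$, which is the claim.

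I would carry out the steps in this order: (1) set up the dual lattice and the open/closed correspondence; (2) state and justify the topological dichotomy relating $\calH$ to $\calV^*$; (3) take probabilities and use the dichotomy to get $\bbP_{1/2}[\calH]+\bbP_{1/2}[\calV^*]=1$; (4) use the bit-flip symmetry of $\bbP_{1/2}$ to pass from closed to open dual crossings; (5) use the lattice automorphism (rotation plus translation) to identify the dual crossing probability with $\bbP_{1/2}[\calH(n-1,n)]$; (6) conclude.

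The main obstacle is step (2), the topological dichotomy: one must be careful with the exact dimensions and positions of the primal and dual rectangles so that the "one crossing or the other" statement is genuinely an exact dichotomy (not merely that both cannot simultaneously occur, and not merely that at least one occurs up to boundary effects). The choice of dimensions $n-1$ and $n$ — one even-ish, one odd-ish relative to the other, i.e. differing by $1$ — is exactly what makes the primal rectangle and the rotated dual rectangle congruent, so that steps (5) goes through cleanly; with a square or with a mismatched aspect ratio the argument would only give an inequality. Properly this dichotomy is a consequence of planar duality for the square lattice (a discrete Jordan-curve-type argument), and I would either invoke it as well known or include a short blocking-path argument.
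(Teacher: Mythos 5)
Your argument is correct and is essentially the paper's own proof: the complement of $\calH(n-1,n)$ is identified with a top--bottom dual crossing via planar duality, the bit-flip symmetry at $p=1/2$ (equivalently, that $\omega^*$ is again Bernoulli$(1/2)$) converts this to an open dual crossing, and the $\pi/2$ rotation identifies the two events' probabilities, giving $2\,\bbP_{1/2}[\calH(n-1,n)]=1$. The paper likewise asserts the topological dichotomy without a detailed blocking-path argument, so your treatment matches it in both structure and level of rigor.
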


\begin{proof}
Consider the dual lattice $(\bbZ^2)^*:=(\tfrac12,\tfrac12)+\bbZ^2$ of the lattice $\bbZ^2$ defined by putting a vertex in the middle of each face, and edges between nearest neighbors. Each edge $e\in\bbE^2$ is in direct correspondence with an edge $e^*$ of the dual lattice crossing it in its middle. For a finite graph $G=(V,E)$, let $G^*$ be the graph with edge-set $E^*=\{e^*,e\in E\}$ and vertex-set given by the endpoints of the edges in $E^*$. 

A configuration $\omega$ in $\{0,1\}^E$ is naturally associated with a dual configuration $\omega^*$ on $\{0,1\}^{E^*}$ as follows: for every $e\in E$, set $\omega^*_{e^*}:=1-\omega_e$.
Note that if the law of $\omega$ is a product of independent Bernoulli variables with parameter $p$, then the law of $\omega^*$ is a product of Bernoulli variables with parameter $1-p$.

Observe that the complement of the event $\calH(n-1,n)$ is the event that there exists a path of edges in $\omega^*$ going from top to bottom in the graph $R(n-1,n)^*$. Using the symmetry by rotation by $\pi/2$, one sees that at $p=1/2$, these two events have the same probability, which must therefore be equal, for every $n\ge1$, so that
\begin{equation}\label{eq:1/2}\bbP_{1/2}[\calH(n-1,n)]=\tfrac12.\end{equation}
\end{proof}

In particular, crossing probabilities for squares (they are not quite squares but it is pretty much the same) do not tend to 0 or 1 as $n$ tends to infinity. One may wonder whether this is simply due to the fact that we chose a rectangle which is almost a square, or whether this holds for every rectangle which is not too degenerate, meaning that they are not too flat. 
We are going to see that this is the case. This property, which is called the Box-Crossing Property, is absolutely fundamental for the understanding of the phase $p=1/2$.
\begin{theorem}\label{thm:RSW}
For any $\rho>0$, there exists $c=c(\rho)>0$ such that for all $n\ge1$,
$$c\le \bbP_{1/2}[\calH(\rho n,n)]\le 1-c.$$
\end{theorem}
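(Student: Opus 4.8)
The plan is to prove the Box-Crossing Property (Theorem~\ref{thm:RSW}) by establishing a Russo--Seymour--Welsh type estimate, using the sharp threshold input from Theorem~\ref{symmetric_sharp_threshold} as the engine, together with the a-priori bound $\bbP_{1/2}[\calH(n-1,n)]=\tfrac12$ of \eqref{eq:1/2}. The two inequalities are not symmetric to prove: by self-duality, a horizontal crossing of $R(n-1,n)$ in $\omega$ is dual to a vertical crossing of the dual rectangle in $\omega^*$, and since $\omega^*$ at $p=1/2$ has the same law as $\omega$, a uniform lower bound $\bbP_{1/2}[\calH(\rho n,n)]\ge c$ for all $\rho$ automatically yields the matching upper bound $\bbP_{1/2}[\calH(\rho n,n)]\le 1-c'$ (cross the complementary rectangle in the dual the other way). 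So the heart of the matter is the \emph{lower bound}, and it suffices to prove it for $\rho$ large, say $\rho=2$, because a horizontal crossing of a long rectangle can be built by combining finitely many crossings of overlapping near-squares and short rectangles.

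The key steps, in order. First, I would set up the standard RSW bootstrap: from a lower bound on the crossing probability of a square (here the $(n-1)\times n$ rectangle, with probability exactly $\tfrac12$) plus a lower bound on the probability of crossing a square the ``long way'' after gluing, one upgrades to crossing a $2\times 1$ rectangle with probability bounded below. The classical way to glue uses the FKG inequality (positive association of increasing events under $\bbP_p$) together with a symmetry/reflection argument to control the probability that a horizontal crossing of one square connects to a horizontal crossing of the horizontally-adjacent square through a vertical crossing of their overlap; I would invoke a conditioning-on-the-lowest-crossing (or square-root trick) argument to handle the ``connecting in the middle'' step. Second, and this is where the sharp threshold enters, one argues by contradiction: suppose along a subsequence $\bbP_{1/2}[\calH(\rho n,n)]\to 0$. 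Consider the increasing boolean function $\ff_n=\mathbbm1_{\calH(\rho n,n)}$ on the edge set $E$ of the rectangle. This function is (essentially) invariant under the translations of the rectangle in the horizontal direction, which act with bounded orbits, so after a mild modification — e.g. wrapping the rectangle into a cylinder, or using the approximate transitivity — one gets a group acting transitively enough that Theorem~\ref{symmetric_sharp_threshold} applies and gives $f_n'(p)\ge c\log|E|\,\mathrm{Var}_p(\ff_n)$. Integrating this differential inequality as in Section~\ref{sec:2.2}, the window in $p$ where $f_n$ is bounded away from $0$ and $1$ has width $O(1/\log n)\to 0$; hence if the crossing probability of a long rectangle is small at $p=\tfrac12$, then at $p=\tfrac12+\delta_n$ (with $\delta_n\to0$) it is close to $1$, while at $p=\tfrac12-\delta_n$ the dual crossing probability of a long rectangle is close to $1$. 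Combining the latter with RSW-gluing at $p=\tfrac12-\delta_n$ (which is monotone and only helps) produces, with probability bounded below, disjoint long dual crossings at $p=\tfrac12-\delta_n$; but monotonicity and the exactly-$\tfrac12$ identity \eqref{eq:1/2} for squares force a contradiction — one cannot have the square crossing probability staying at $\tfrac12$ while long-rectangle crossings jump, because long crossings can be manufactured from square crossings via finitely many FKG gluings, whose number does not grow, so the long-rectangle probability at $p=\tfrac12$ is bounded below by a fixed power of $\tfrac12$.

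The main obstacle I expect is the symmetry requirement of Theorem~\ref{symmetric_sharp_threshold}: the crossing event of a rectangle is \emph{not} invariant under a transitive group on its edges — corners and boundary behave differently, and horizontal and vertical directions are distinguished. The honest fix (the one Bollob\'as--Riordan use) is either to pass to a torus/cylinder to restore translation invariance and then relate torus crossings to rectangle crossings by unfolding, or to replace the bare Theorem~\ref{symmetric_sharp_threshold} by the non-symmetric Theorem~\ref{maximum_influence}: one shows that either some edge has large influence — which, by a geometric argument (a pivotal edge sits on an ``interface'' and forces a four-arm-type configuration), cannot happen uniformly — or all influences are small and Theorem~\ref{maximum_influence} again yields $f_n'(p)\gtrsim \log n\,\mathrm{Var}_p(\ff_n)$. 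Making this influence/pivotality geometry precise, and carefully organizing the RSW gluing so that all constants are uniform in $n$, are the parts that require genuine care; the differential-inequality integration and the duality bookkeeping are routine given Section~\ref{sec:2.2} and \eqref{eq:1/2}.
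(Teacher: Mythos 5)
Your ``first step'' is, in fact, essentially the entire proof, and the rest of your plan is both unnecessary and logically backwards. The paper proves Theorem~\ref{thm:RSW} by a purely geometric argument at $p=\tfrac12$: starting from the self-duality identity \eqref{eq:1/2}, it conditions on the highest left--right crossing $\Gamma$ of the square $S=[0,n]^2$, reflects $\Gamma$ through the vertical axis to build a symmetric domain $V(\gamma)$ inside $S'=[-n,n]^2$, and uses this symmetry together with the freshness of the configuration below $\Gamma$ to show that $\Gamma$ reaches the bottom side $\ell$ of $S'$ with conditional probability at least $\tfrac14$; three applications of FKG then give $\bbP_{1/2}[\calV(2n,3n)]\ge\tfrac1{128}$, and longer aspect ratios and the matching upper bound follow by the FKG gluing \eqref{eq:u} and duality, exactly as you describe. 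This is precisely the ``conditioning on the extremal crossing plus reflection plus FKG'' step you sketch; carried out carefully, it finishes the proof with no sharp threshold anywhere.

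Your second step contains a genuine gap: the contradiction you try to derive rests on the claim that ``long crossings can be manufactured from square crossings via finitely many FKG gluings, whose number does not grow, so the long-rectangle probability at $p=\tfrac12$ is bounded below by a fixed power of $\tfrac12$.'' That claim \emph{is} the theorem being proved --- passing from square crossings to hard-direction rectangle crossings is exactly what FKG alone cannot do (FKG glues overlapping crossings only once you can force them to intersect, which is what the reflection argument is for). So the contradiction argument is circular. More broadly, the logical order in the paper is the reverse of yours: Theorem~\ref{thm:RSW} is an \emph{input} to the sharp-threshold step (it supplies the influence bound via Corollary~\ref{cor:aa} in the proposition that follows), not an output of it. Your worry about the transitivity hypothesis of Theorem~\ref{symmetric_sharp_threshold} failing for crossing events is well founded, but moot here; and your proposed fix via pivotality/one-arm estimates would itself require Corollary~\ref{cor:aa}, hence Theorem~\ref{thm:RSW}, compounding the circularity.
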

Note that we immediately deduce a similar bound for probabilities of being crossed vertically.
The uniform upper bound follows easily from the uniform lower bound and duality since the complement of the event that a rectangle is crossed vertically is the event that the dual rectangle is crossed horizontally in the dual configuration. 

Also, as soon as we have to our disposal a uniform lower bound (in $n$) for some $\rho=1+\ep>1$, then one can easily combine crossings in different rectangles to obtain a uniform lower bound for any $\rho'>1$. Indeed, define (for every integer $i\ge0$) the rectangles $R_i:=[i\ep n,(i\ep+\rho)n]\times[0,n]$ and the squares $S_i:=R_i\cap R_{i+1}$. Also define $\calH(R_i)$ and $\calV(S_i)$ to be the events that $R_i$ is crossed horizontally, and $S_i$ vertically. Then, 
\begin{align}\label{eq:u}
\bbP_{1/2}[\calH(\rho' n,n)]\ge  \bbP_{1/2}\Big[\bigcap_{ i=0}^{\lceil (\rho'-1)/\ep\rceil}(\calH(R_i)\cap\calV(S_i))\Big]\stackrel{\rm (FKG)}\ge c(\rho)^{2\lceil \rho'/\ep\rceil}.
\end{align}
Above, we used the following inequality, known as the Harris or FKG inequality (see \cite{Gri99a}): for any two increasing boolean functions $\ff$ and $\mathbf g$, 
\begin{equation}\label{eq:FKG}\bbE_p[\ff \mathbf{g}]\ge\bbE_p[\ff]\bbE_p[\mathbf g].\end{equation}
Note that for the event under consideration in \eqref{eq:u}, indication functions were increasing.

Unfortunately, we cannot start a priori from an estimate with $\rho>1$ and must deal with the case $\rho=1$.  This will in fact be the major obstacle: the main difficulty of Theorem~\ref{thm:RSW} lies in passing from crossing squares with probabilities bounded uniformly from below to crossing rectangles in the hard direction with probabilities bounded uniformly from below. A statement claiming that crossing a rectangle in the hard direction can be expressed in terms of the probability of crossing squares is called a Russo-Seymour-Welsh type theorem. For Bernoulli percolation on the square lattice, such a result was first proved in \cite{Rus81,SeyWel78}.
Since then, many proofs have been produced, among which \cite{BolRio06,BolRio06c,BolRio10,Tas14b,Tas14}.
This seemingly technical statement is in fact at the root of virtually every study of the critical phase of Bernoulli percolation.

\begin{proof}
As mentioned before the proof, it is sufficient to prove that the crossing probability in the hard direction, for a rectangle with $\rho=3/2$:
$$\bbP_{1/2}[\calV(2n,3n)]\ge \tfrac1{128}.$$
We choose to work with vertical crossings of the rectangle $R:=[-n,n]\times[-n,2n]$. We will need some additional notation.
Set $S:=[0,n]^2$ and $S':=[-n,n]^2$. Also, define $\ell:=[-n,n]\times\{-n\}$ to be the bottom side of $R$ (or equivalently of $S'$).

Let $A$ (resp.~$A'$) be the event that there exists a bottom-top (left-right) crossing of $S$, and $B$ be the event that there exists a left-right crossing of $S$ that is connected to $\ell$ in $S'$.
For a path $\gamma$ from left to right in $S$, and $\sigma(\gamma)$ the reflection of this path with respect to $\{0\}\times\bbZ$, define the set $V(\gamma)$ of vertices in $S'$ below $\gamma\cup\sigma(\gamma)$ (see Fig.~\ref{fig:first RSW} on the left).
Now, on $A'$, condition on the highest left-right crossing $\Gamma$ of $S$. We find that
\begin{align*}\bbP_{1/2}[B]&\ge \sum_{\gamma}\bbP_{1/2}[B\,|\,A'\cap\{\Gamma=\gamma\}]\,\bbP_{1/2}[\{\Gamma=\gamma\}\cap A']\\
&\ge\sum_{\gamma}\bbP_{1/2}[\gamma\leftrightarrow\ell\text{ in }V(\gamma)]\,\bbP_{1/2}[\{\Gamma=\gamma\}\cap A']\\&\ge\tfrac14\sum_{\gamma}\bbP_{1/2}[\{\Gamma=\gamma\}\cap A']=\tfrac14\bbP_{1/2}[A']\ge\tfrac18.\end{align*}
In the third line, to deduce the lower bound $1/4$, we used the facts that conditioned on $A\cap\{\Gamma=\gamma\}$, the configuration in $V(\gamma)$ is a Bernoulli percolation of parameter $1/2$ (since $A\cap\{\Gamma=\gamma\}$ is measurable with respect to the random variables $\omega_e$ for edges $e$ on $\gamma$ or above $\gamma$), the symmetry and the fact that the probability of a bottom-top crossing in $V(\gamma)$ is larger than $1/2$ (since it is easier than a bottom-top crossing of $S'$). 
Fig.~\ref{fig:first RSW} on the right illustrates that $R$ is crossed vertically if the three events $A$, $B$ and $\widetilde B$ occur, where $\widetilde B$ is the event that there exists a left-right crossing of $S$ which is connected to $[-n,n]\times\{2n\}$ in $[-n,n]\times[0,2n]$. By symmetry,
$$\bbP_{1/2}[\widetilde B]=\bbP_{1/2}[B]\ge\tfrac18.$$
The FKG inequality \eqref{eq:FKG} (used in the second inequality) implies that
\begin{align*}\bbP_{1/2}[\calV(2n,3n)]&\ge\bbP_{1/2}[A\cap B\cap \widetilde B]\\
&\ge \bbP_{1/2}[A]\bbP_{1/2}[B]\bbP_{1/2}[\widetilde B]\ge \tfrac1{128}.\end{align*}
\end{proof}
\begin{figure}\centerline{
\includegraphics[width=0.40\textwidth]{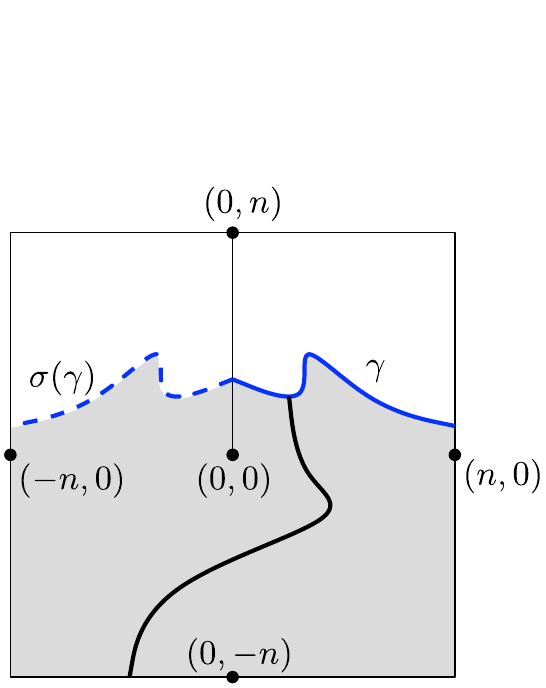}\quad\quad\includegraphics[width=0.40\textwidth]{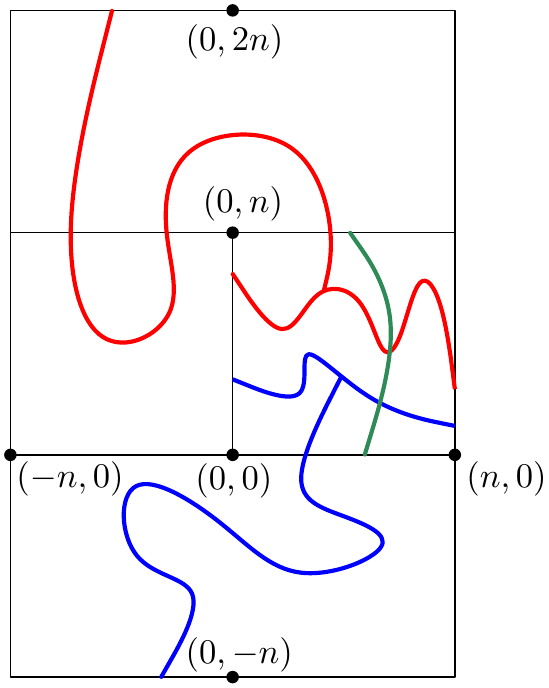}}
\caption{\textbf{Left.} The set $V(\gamma)$. {\bf Right.} The combination of the events $A$, $B$ and $\widetilde B$ imply the event that $R$ is crossed vertically.}\label{fig:first RSW}
\end{figure}

Note that a trivial corollary of the previous statement is the following. 
\begin{corollary}\label{cor:aa}
There exists $\alpha>0$ such that for all $n\ge1$,
$\bbP_{1/2}[0\leftrightarrow \partial\Lambda_n]\le n^{-\alpha}.$
In particular, $p_c\ge1/2$.
\end{corollary}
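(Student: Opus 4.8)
The idea is the standard one: if the origin is connected to $\partial\Lambda_n$, then it is connected across a sequence of disjoint "annular'' regions, each of which is hard to cross, and the crossings in disjoint annuli are independent. More precisely, I would first record that the event $0\leftrightarrow\partial\Lambda_n$ is contained in the event that, for each scale $2^k\le n$, the annulus $\Lambda_{2^{k+1}}\setminus\Lambda_{2^k}$ is crossed from its inner boundary to its outer boundary. The topological input is that such a crossing forces one of the four rectangles forming the annulus (say $[2^k,2^{k+1}]\times[-2^{k+1},2^{k+1}]$ and its three rotations) to be crossed in the hard direction; so by Theorem~\ref{thm:RSW} (applied with a fixed aspect ratio, e.g. $\rho=2$, together with the FKG/square-root-trick style combination already used in the text, or just directly) the probability that a given annulus is \emph{not} crossed is at least some constant $\delta=\delta(d=2)>0$ uniform in $k$.

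Next I would make the independence precise: choosing the scales $2^0,2^1,\dots,2^m$ with $m=\lfloor\log_2 n\rfloor$ (or, to be safe about the degenerate smallest scales, starting from some fixed scale), the $m$ annuli $\Lambda_{2^{k+1}}\setminus\Lambda_{2^k}$ are pairwise edge-disjoint, hence the events "annulus $k$ is crossed'' depend on disjoint sets of edges and are independent under $\bbP_{1/2}$. Therefore
\[
\bbP_{1/2}[0\leftrightarrow\partial\Lambda_n]\ \le\ \prod_{k=0}^{m-1}\bbP_{1/2}[\Lambda_{2^{k+1}}\setminus\Lambda_{2^k}\text{ is crossed}]\ \le\ (1-\delta)^{m}\ \le\ n^{-\alpha}
\]
for $\alpha=\log_2\frac{1}{1-\delta}>0$ (absorbing the multiplicative constant coming from the at-most-one lost scale into a slightly smaller $\alpha$, valid for all $n\ge1$ by further shrinking $\alpha$ and using that the left side is at most $1$).

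Finally, for the statement $p_c\ge1/2$: the bound just proved shows $\bbP_{1/2}[0\leftrightarrow\partial\Lambda_n]\to0$, so by monotone convergence $\bbP_{1/2}[0\leftrightarrow\infty]=0$; since $\bbZ^2$ is countable and translation-invariant, this gives $\bbP_{1/2}[\exists\text{ infinite cluster}]=0$, whence $1/2\le p_c$ by definition of $p_c$ (and monotonicity of $p\mapsto\bbP_p[0\leftrightarrow\infty]$, which follows from the standard coupling).

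The only genuine point requiring care is the deterministic/topological step: that a connection from the inner to the outer boundary of a (dyadic) square annulus entails a hard-way crossing of one of the four constituent rectangles, so that Theorem~\ref{thm:RSW} applies with a ratio bounded away from $1$. This is elementary planar topology but is where one must be slightly attentive about the shapes (the rectangles are $n\times 3n$-type, ratio $\approx 1.5$ or $2$, exactly in the regime covered by Theorem~\ref{thm:RSW}); everything else is bookkeeping with FKG and independence on disjoint edge sets.
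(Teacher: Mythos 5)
Your proposal follows essentially the same route as the paper's proof: decompose $\{0\leftrightarrow\partial\Lambda_n\}$ into crossings of disjoint dyadic annuli, bound each annulus-crossing probability away from $1$ via Theorem~\ref{thm:RSW} combined with the FKG inequality applied to the four complementary (decreasing) rectangle events, multiply using independence, and then pass from $\bbP_{1/2}[0\leftrightarrow\infty]=0$ to $p_c\ge\tfrac12$ by countability and translation invariance. One small correction to your ``topological input'': a radial crossing of the annulus forces one of the four rectangles $[2^k,2^{k+1}]\times[-2^{k+1},2^{k+1}]$ (and its rotations) to be crossed in the \emph{easy} (short) direction, not the hard one --- this is harmless, since the upper bound $\bbP_{1/2}[\calH(\rho n,n)]\le 1-c(\rho)$ of Theorem~\ref{thm:RSW} holds for every aspect ratio $\rho>0$ (equivalently, the complement of the easy crossing is a hard-direction dual crossing, whose probability is bounded below), so the uniform bound $1-\delta$ on the annulus-crossing probability still follows exactly as you intend.
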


\begin{proof}
Consider the event that $\Lambda_k$ is connected to $\partial\Lambda_{2k}$. For this event to happen, it is necessary that one of the four rotated versions of the event that $[-2k,2k]\times[-k,k]$ is crossed vertically must occur. Therefore, the FKG inequality applied to the complements $A_1,\dots,A_4$ of these events implies that 
$$\bbP_{1/2}[\Lambda_k\leftrightarrow \partial\Lambda_{2k}]\le1-\bbP_{1/2}[A_1\cap\dots\cap A_4]\le 1-\bbP_{1/2}[A_1]^4\le 1-c^4=:c_1<1.$$
Since $0\leftrightarrow \partial\Lambda_n$ is included in the intersection of the events that $\Lambda_k\leftrightarrow\partial\Lambda_{2k}$, where $k\le n$ is a power of 2, the independence implies that 
$$\bbP_{1/2}[0\longleftrightarrow \partial\Lambda_n]\le c_1^{\lfloor\log_2(n)\rfloor}\le n^{-\alpha}$$
provided $\alpha$ is chosen small enough.

To prove that $p_c\ge1/2$, observe that by letting $n$ tends to infinity, we find that $\bbP_p[0\leftrightarrow\infty]=0$. Using that there are countably many vertices in $\bbZ^2$, and that for each one of them, the probability of being connected to infinity is zero (by invariance under translations), we deduce that the probability that there is an infinite connected component in $\omega$ is 0.
\end{proof}

Now that we proved that crossing probabilities remain bounded away from 0 and 1 at $p=1/2$, it is natural to ask oneself whether this is also the case for the values of $p$ that are not equal to $1/2$. This is where we will use Theorem~\ref{maximum_influence}. We will prove the following statement:

\begin{proposition}
For any $p>1/2$, there exists $\beta=\beta(p)>0$ such that 
$\bbP_p[\calH(2n,n)]\ge 1-\tfrac1{\beta}n^{-\beta}$.
\end{proposition}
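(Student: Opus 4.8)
The plan is to set $f_n(q):=\bbP_q[\calH(2n,n)]$ and to prove a differential inequality $f_n'(q)\ge \tfrac{c}{C_p}(\log n)\,\mathrm{Var}_q(\mathbbm1_{\calH(2n,n)})$ valid uniformly for $q\in[\tfrac12,p]$, and then to integrate it from $\tfrac12$ to $p$ starting from the RSW lower bound $f_n(\tfrac12)\ge c_0>0$ provided by Theorem~\ref{thm:RSW} (with $\rho=2$). Note that one cannot invoke Theorem~\ref{symmetric_sharp_threshold} directly, because $\calH(2n,n)$ is not invariant under a transitive group action on the edges of $R(2n,n)$; instead we go through Theorem~\ref{maximum_influence} and bound every influence by hand.

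The heart of the argument is the claim that there exist $C,\alpha>0$ (the constants of Corollary~\ref{cor:aa}) such that $\mathrm{Inf}_e[\mathbbm1_{\calH(2n,n)}]\le Cn^{-\alpha}$ for \emph{every} edge $e$ of $R(2n,n)$ and \emph{every} $q\ge\tfrac12$. Fix $e$ with dual edge $e^*$. If $e$ is pivotal in a configuration $\omega$, then turning $e$ off destroys every horizontal crossing of $R(2n,n)$, so by the planar duality recalled before \eqref{eq:1/2} the dual configuration $\omega^*$ then contains a top-to-bottom dual crossing of $R(2n,n)$; this dual path must use $e^*$ (otherwise it would already obstruct a horizontal crossing with $e$ on). Following it from $e^*$ toward the top and toward the bottom side of $R(2n,n)$, and noting that the two corresponding heights add up to $n$, we see that one of the two pieces has $\ell^\infty$-diameter at least $n/2$; hence in $\omega^*$ the (increasing) event $\{\,e^*\leftrightarrow\partial\Lambda_{\lfloor n/2\rfloor}(e^*)\,\}$ occurs. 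Under $\bbP_q$ the law of $\omega^*$ is the product Bernoulli measure of parameter $1-q\le\tfrac12$, so by monotonicity of increasing events in the parameter its probability is at most its value at parameter $\tfrac12$, which by Corollary~\ref{cor:aa} applied on the dual lattice (a translate of $\bbZ^2$) is at most $(n/2)^{-\alpha}$. The key point is that we never need an estimate valid \emph{at} $q$: the relevant arm is a dual one, and the dual stays (sub)critical throughout $q\ge\tfrac12$.

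Granting this, Theorem~\ref{maximum_influence} finishes the job. For $n$ large enough that $Cn^{-\alpha}<1$ we have $\log(1/\mathrm{Inf}_e)\ge \alpha\log n-\log C\ge\tfrac\alpha2\log n$ for every $e$, so Theorem~\ref{maximum_influence} together with Lemma~\ref{lem:dif} gives, for all $q\in[\tfrac12,p]$,
\begin{equation*}
\mathrm{Var}_q(\mathbbm1_{\calH(2n,n)})\ \le\ c\log\tfrac1{q(1-q)}\sum_e\frac{\mathrm{Inf}_e}{\log(1/\mathrm{Inf}_e)}\ \le\ \frac{2c\log\frac1{p(1-p)}}{\alpha\log n}\,f_n'(q),
\end{equation*}
where we used that $q\mapsto q(1-q)$ is decreasing on $[\tfrac12,1]$. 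Writing $C_p:=\tfrac{2c}{\alpha}\log\tfrac1{p(1-p)}$ and $\mathrm{Var}_q=f_n(1-f_n)$, this reads $\bigl(\log\tfrac{f_n}{1-f_n}\bigr)'(q)\ge \tfrac{\log n}{C_p}$ on $[\tfrac12,p]$.

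Finally I would integrate this from $\tfrac12$ to $p$. Since $f_n(\tfrac12)=\bbP_{1/2}[\calH(2n,n)]\ge c_0>0$ by Theorem~\ref{thm:RSW}, we get $\log\tfrac{f_n(1/2)}{1-f_n(1/2)}\ge\log c_0$, hence
\begin{equation*}
\frac{f_n(p)}{1-f_n(p)}\ \ge\ c_0\,n^{(p-1/2)/C_p},\qquad\text{so}\qquad 1-f_n(p)\ \le\ \tfrac1{c_0}\,n^{-(p-1/2)/C_p}.
\end{equation*}
Choosing $\beta=\beta(p)>0$ small enough (at most $(p-\tfrac12)/C_p$ and at most $c_0$, and small enough that the asserted bound is vacuous for the finitely many $n$ below the "$n$ large" threshold) yields $\bbP_p[\calH(2n,n)]\ge 1-\tfrac1\beta n^{-\beta}$. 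The only genuinely delicate step is the uniform influence bound of the second paragraph: it is what forces the detour through the \emph{dual} one-arm estimate, and one must check the geometric assertion ``pivotal $\Rightarrow$ a dual arm of length $\ge n/2$ from $e^*$'' for all edges $e$, including those sitting near the sides or corners of $R(2n,n)$ — there the dual crossing still has to span the full height $n$, so the argument goes through unchanged.
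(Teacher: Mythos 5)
Your proof is correct and follows essentially the same route as the paper's: bound every influence uniformly for $q\in[\tfrac12,p]$ by a \emph{dual} one-arm event whose probability is controlled at parameter $1-q\le\tfrac12$ via Corollary~\ref{cor:aa}, feed this into Talagrand's inequality, and integrate the resulting logarithmic differential inequality from $\tfrac12$ to $p$ starting from the RSW bound. The only (cosmetic) divergence is that the paper nominally invokes Theorem~\ref{symmetric_sharp_threshold} even though $\calH(2n,n)$ has no transitive symmetry, whereas you correctly go through Theorem~\ref{maximum_influence} directly --- which is exactly what the paper's argument actually uses.
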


\begin{proof}
Consider the boolean function $\ff:=\mathbbm 1_{\calH(2n,n)}$. Fix an edge $e$ of $R(2n,n)$ and observe that if $\nabla_e\ff(\omega)\ne 0$, then one of the endpoints of the dual edge $e^*$ of $e$ must be connected by a path in the dual configuration $\omega^*$ of $\omega$ to distance $n/2$. Since $\omega^*$ is sampled according to iid Bernoulli random variables of parameter $1-p$, Corollary~\ref{cor:aa} implies that
$${\rm Inf}_e(\ff)\le 2\bbP_{1-p}[0\leftrightarrow \partial\Lambda_{n/2}]\le 2\bbP_{1/2}[0\leftrightarrow \partial\Lambda_{n/2}]\le \tfrac1N,$$
where $N=\tfrac12(\tfrac n2)^\alpha$.
As a consequence, we deduce from Theorem~\ref{symmetric_sharp_threshold} that for any $p>1/2$,
$$f'(p)\ge c\log (N) {\rm Var}_p(\ff).$$
Integrating this differential inequality between $1/2$ and $p$ gives that 
$$f(p)\ge 1-\tfrac1{f(1/2)}N^{-c(p-1/2)}.$$
The result follows by setting $\beta$ small enough.
\end{proof}

\begin{proof}[Theorem~\ref{thm:main}]
We already know that $p_c\ge 1/2$. Let us prove the other inequality by proving that for $p>1/2$, the probability that there exists an infinite connected component in $\omega$ is 1. Let $A_n$ and $B_n$ be the events of $\calH(2^{n+1},2^n)$ and $\calV(2^n,2^{n+1})$ respectively. 
Observe that if $A_n$ and $B_n$ occur for all but finitely many $n$, then there exists an infinite connected component in $\omega$.

The previous proposition implies that
$$\sum_{n=1}^\infty \bbP_p[A_n^c]\le \tfrac1{\beta}\sum_{n=1}^\infty 2^{-\beta n}$$
so that the Borel-Cantelli lemma\footnote{The Borel-Cantelli lemma states that if $(A_n)$ is a sequence of events such that $\sum_{n=1}^\infty \bbP[A_n]<\infty$, then the probability that there are infinite many $n$ such that $A_n$ occurs is zero.} implies that the probability that $A_n$ occurs for all but finitely many $n$ is 1. By symmetry by rotation by an angle of $\pi/2$, we immediately deduce the same for the events $B_n$. In conclusion, we proved that the probability that there exists an infinite connected component in $\omega$ is 1.
\end{proof}

\subsection{Sharpness of the phase transition for Bernoulli percolation on $\bbZ^d$}
In higher dimensions, it is hopeless to try to compute the exact value of the critical point (one does not expect it to be equal to any nice number, for instance rational or even algebraic). Nonetheless, one can still try to prove that the model undergoes a sharp phase transition, meaning that probabilities to be connected to distance $n$ decay very fast when $p<p_c$. 
 \begin{theorem}\label{thm:perco} Consider Bernoulli percolation on $\bbZ^d$,
\begin{enumerate}[noitemsep]
\item\label{item:1}For $p<p_c$,  there exists $c_p>0$ such that for all $n\ge1$,
$\bbP_p[0\leftrightarrow \partial\Lambda_n]\le \exp(-c_pn)$.
\item\label{item:2} There exists $c>0$ such that for $p>p_c$, $\bbP_p[0\leftrightarrow \infty]\ge
 c (p-p_c)$.
\end{enumerate}
\end{theorem}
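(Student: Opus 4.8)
\medskip
\noindent The plan is to reduce both statements to a single differential inequality for the functions $\theta_n(p):=\bbP_p[0\leftrightarrow\partial\Lambda_n]$, obtained by feeding a well-chosen family of algorithms into the OSSS inequality (Theorem~\ref{thm:OSSS}). Here and below $\theta(p):=\lim_{n}\theta_n(p)=\bbP_p[0\leftrightarrow\infty]$; note that $\theta_n(p)$ is decreasing in $n$ while each $\theta_n$ is increasing in $p$, and that Margulis--Russo (Lemma~\ref{lem:dif}) applied to the increasing boolean function $\ff_n:=\mathbbm1_{0\leftrightarrow\partial\Lambda_n}$, viewed on the finitely many edges meeting $\Lambda_n$, gives ${\rm Var}_p(\ff_n)=\theta_n(p)(1-\theta_n(p))$ and $\sum_e{\rm Inf}_e(\ff_n)=\theta_n'(p)$. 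Writing $\Sigma_n(p):=\sum_{k=0}^n\theta_k(p)$, the goal of the first half of the argument is to establish
$$\theta_n'(p)\ \geq\ \frac{c\,n}{\Sigma_n(p)}\,\theta_n(p)\bigl(1-\theta_n(p)\bigr)$$
for some universal $c>0$ and every $n$.

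The crux --- and the step I expect to be the main obstacle --- is the choice of algorithms together with the control of their revealments. For each $k\in\{1,\dots,n\}$ I would take $\TT_k$ to be the algorithm that explores the connected component of $\partial\Lambda_k$: it first queries all edges incident to $\partial\Lambda_k$ and then, each time an open edge reaches a new vertex, adds that vertex and queries its incident edges, stopping once the value of $\ff_n$ is determined. The key geometric observation is that, since $0$ lies in the interior of $\Lambda_k$, every path from $0$ to $\partial\Lambda_n$ must cross $\partial\Lambda_k$; hence $\{0\leftrightarrow\partial\Lambda_n\}$ occurs exactly when the explored cluster contains both $0$ and a vertex of $\partial\Lambda_n$, so $\TT_k$ does compute $\ff_n$. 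An edge $e=\{x,y\}$ can be queried by $\TT_k$ only if $x$ or $y$ is connected to $\partial\Lambda_k$, so since $d_\infty(x,\partial\Lambda_k)=|\,\|x\|_\infty-k\,|$, translation invariance gives
$$\delta_e(\TT_k)\ \leq\ \bbP_p[x\leftrightarrow\partial\Lambda_k]+\bbP_p[y\leftrightarrow\partial\Lambda_k]\ \leq\ \theta_{|\|x\|_\infty-k|}(p)+\theta_{|\|y\|_\infty-k|}(p).$$
As $k$ ranges over $\{1,\dots,n\}$ each value of $|\,\|x\|_\infty-k\,|$ is attained at most twice, so $\sum_{k=1}^n\delta_e(\TT_k)\leq C\,\Sigma_n(p)$ for a universal $C$. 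Applying Theorem~\ref{thm:OSSS} to each $\TT_k$, averaging the resulting $n$ inequalities, and using $p(1-p)\leq\tfrac14$ together with the two identities above produces the displayed differential inequality.

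Granting the differential inequality, I would set $\tilde p_c:=\sup\{p:\Sigma_\infty(p)<\infty\}$; a crude Peierls estimate ($\theta_n(p)\leq(2dp)^n$ for small $p$) and the existence of percolation for $p$ close to $1$ give $\tilde p_c\in(0,1)$. For item~\ref{item:1}, fix $p<\tilde p_c$ and pick $q\in(p,\tilde p_c)$ with $S:=\Sigma_\infty(q)<\infty$; then $\theta_n(q)\to0$, so for $n$ large and all $r\in[p,q]$ one has $\theta_n(r)\leq\tfrac12$ and $\Sigma_n(r)\leq S$, hence $(\log\theta_n)'(r)\geq cn/(2S)$ on $[p,q]$. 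Integrating from $p$ to $q$ gives $\theta_n(p)\leq\theta_n(q)\,e^{-cn(q-p)/(2S)}$, i.e.\ exponential decay of $\theta_n(p)$ (a routine adjustment, using $\theta_1(p)<1$ and submultiplicativity, makes the constant valid for all $n\geq1$). In particular $\theta(p)=0$ for $p<\tilde p_c$, so $\tilde p_c\leq p_c$.

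For item~\ref{item:2} it suffices to prove $\theta(p)\geq c'(p-\tilde p_c)$, since $\tilde p_c\leq p_c$. Fix $p>\tilde p_c$. The delicate point is to first show $\theta(p)>0$: were $\theta(p)=0$, one would have $\theta_n(r)\to0$ for all $r\leq p$, hence $\Sigma_n(r)/n\to0$, so the factor $n/\Sigma_n(r)$ in the differential inequality diverges, and integrating over a short interval ending at $p$ would force $\theta_n(p)$ to grow without bound --- a contradiction; this is exactly the type of argument handled by the now-standard analysis of such differential inequalities in Duminil-Copin--Raoufi--Tassion, which I would follow. Once $\theta(p)>0$ is known, using that $\Sigma_n(r)/n$ is nonincreasing in $n$ with limit $\theta(r)$ and passing to the limit $n\to\infty$ in the integrated inequality over $[a,p]$ (with $\tilde p_c<a<p$) yields $\log\tfrac{\theta(p)}{1-\theta(p)}-\log\tfrac{\theta(a)}{1-\theta(a)}\geq c\int_a^p\tfrac{dr}{\theta(r)}$; differentiating almost everywhere turns this into $\theta'(r)\geq c(1-\theta(r))$ for a.e.\ $r>\tilde p_c$, and integrating down to $\tilde p_c$ (where $\theta=0$) gives $\theta(p)\geq c'(p-\tilde p_c)$. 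This also shows $\theta(p)>0$ for every $p>\tilde p_c$, hence $p_c\leq\tilde p_c$ and therefore $p_c=\tilde p_c$, which upgrades item~\ref{item:1} to all $p<p_c$ and pins down the constant in item~\ref{item:2}. In short, the analytic endgame is soft once one invokes the standard differential-inequality lemma; the genuinely hard and new ingredient is the construction of the decision trees $\TT_k$ and the revealment bound $\sum_k\delta_e(\TT_k)\leq C\,\Sigma_n(p)$.
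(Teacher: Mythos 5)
Your proposal follows the paper's proof essentially step for step: the same family of algorithms $\TT_k$ exploring the clusters of $\partial\Lambda_k$ for $k=1,\dots,n$, the same revealment bound whose sum over $k$ is $O(S_n)$ by translation invariance, the resulting differential inequality $\theta_n'\ge \tfrac{cn}{\Sigma_n}\theta_n(1-\theta_n)$ (this is Lemma~\ref{cor:OSSS} combined with Margulis--Russo), and finally the analysis of that differential inequality, which is exactly the paper's Lemma~\ref{lem:technical}. The probabilistic core of your argument, and your treatment of the subcritical regime, match the paper.

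Two caveats on the supercritical half, which you partly outsource to ``the standard analysis''. First, your heuristic for $\theta(p)>0$ when $p>\tilde p_c$ does not work as stated: integrating $\bigl(\log\tfrac{\theta_n}{1-\theta_n}\bigr)'\ge cn/\Sigma_n$ over $[a,p]$ gives
\begin{equation*}
\log\tfrac{\theta_n(p)}{1-\theta_n(p)}\;\ge\;\log\tfrac{\theta_n(a)}{1-\theta_n(a)}+c(p-a)\tfrac{n}{\Sigma_n(p)},
\end{equation*}
and the divergent second term can be absorbed by the first, since $\theta_n(a)$ may itself tend to $0$ fast enough (e.g.\ $\theta_n(a)\sim 1/(n\log n)$ against $n/\Sigma_n(p)\sim\log n$ yields no contradiction); so ``$\theta_n(p)$ grows without bound'' does not follow. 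The paper's Lemma~\ref{lem:technical} circumvents this by working with $T_n=\tfrac1{\log n}\sum_{i\le n}f_i/i$, which converges to $f$ and whose derivative is bounded below by $\tfrac{\log\Sigma_{n+1}-\log\Sigma_1}{\log n}$; that is the content you actually need to import. Second, your threshold $\tilde p_c=\sup\{p:\Sigma_\infty(p)<\infty\}$ is not obviously the one at which the supercritical argument kicks in: for $p>\tilde p_c$ you only know $\Sigma_n(r)\to\infty$ for $r$ slightly below $p$, whereas the $T_n$ argument needs $\limsup_n \log\Sigma_n(r)/\log n\ge1$. The two thresholds do coincide, but only via the two-step bootstrap in the proof of property {\bf P1} (stretched-exponential decay, then finite susceptibility, whenever $\limsup_n\log\Sigma_n/\log n<1$). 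With these two points repaired --- that is, with Lemma~\ref{lem:technical} as actually proved --- your argument is the paper's.
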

Note that the theorem does not mention anything on the $p=p_c$ phase. The reason is that the proof relies deeply on moving the value of $p$. The question of the $p=p_c$ phase is tremendously difficult in general and we avoid discussing it here. Let us also mention that the second item is often called the mean field lower bound. The lower bound is matched (up to constant) for $d\ge11$ \cite{FitHof17}, but is expected not to be sharp for small values of $d$ (this fact is known in dimension 2).

Theorem~\ref{thm:perco} was first  proved by Aizenman,
  Barsky~\cite{AizBar87} and Menshikov~\cite{Men86} (these two proofs are presented in \cite{Gri99a}). See also a recent short proof  \cite{DumTas15,DumTas15a}. Here, we choose to present a new proof \cite{DumRaoTas17} using the OSSS inequality.
Let us start the proof with a general lemma which is nothing but an undergrad exercise in analysis.   
  \begin{lemma}\label{lem:technical}
Consider a converging sequence of differentiable functions $f_n:[0,x_0]\longrightarrow [0,M]$ which are increasing in $x$ and satisfy
 \begin{equation}\label{eq:mlem}f_n'\ge \frac{n}{\Sigma_{n}}f_n\end{equation}
 for all $n\ge1$, where $\Sigma_n=\sum_{k=0}^{n-1}f_k$. Then, there exists $x_1\in[0,x_0]$ such that  
 \begin{itemize}[noitemsep]
 \item[{\bf P1}] For any $x<x_1$, there exists $c_x>0$ such that for any $n$ large enough,
 $f_n(x)\le \exp(-c_x n).$
  \item[{\bf P2}] For any $x>x_1$, $\displaystyle f=\lim_{n\rightarrow \infty}f_n$ satisfies $f(x)\ge x-x_1.$
 \end{itemize}
  \end{lemma}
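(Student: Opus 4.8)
Dividing the hypothesis by $f_n$ gives $(\log f_n)'(z)\ge n/\Sigma_n(z)=1/(\Sigma_n(z)/n)$ wherever $f_n(z)>0$, and since $f_k(z)\to f(z):=\lim_k f_k(z)$ forces the averages $\Sigma_n(z)/n$ to converge to $f(z)$ as well, this reads, morally, $(\log f)'\ge 1/f$ (so $f'\ge1$ and $f$ grows at unit speed) on the set $\{f>0\}$, and $(\log f_n)'\ge cn$ (exponential decay of $f_n$) wherever $\Sigma_n$ stays bounded. The plan is therefore to set
\[
x_1\ :=\ \sup\Big\{\,x\in[0,x_0]\ :\ \textstyle\sum_{k\ge0}f_k(x)<\infty\,\Big\},
\]
with $\sup\emptyset:=0$; monotonicity of the $f_k$ makes this set an interval containing $0$, so $x_1$ is well defined, and the task is to show that this one $x_1$ serves for both \textbf{P1} and \textbf{P2}.

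\textbf{For P1}, fix $x<x_1$ and pick $y\in(x,x_1]$ with $A:=\sum_{k\ge0}f_k(y)<\infty$. Since $z\mapsto\Sigma_n(z)$ is increasing, $\Sigma_n(z)\le A$ on $[0,y]$, hence $(\log f_n)'\ge n/A$ on $[x,y]$; integrating gives $f_n(x)\le f_n(y)\,e^{-n(y-x)/A}\le M\,e^{-n(y-x)/A}$, which is exactly \textbf{P1} with any $c_x<(y-x)/A$ (points where some $f_n$ vanishes on $[x,y]$ being trivial).

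\textbf{For P2}, fix $x>x_1$, so that $\Sigma_n(x)\to\infty$. Writing $f_k=\Sigma_{k+1}-\Sigma_k$, using $t-1\ge\log t$, and then performing an Abel summation, the hypothesis (with $f_0'\ge0$) gives
\[
\Sigma_n'(z)\ \ge\ \sum_{k=1}^{n-1}\frac{k\,f_k(z)}{\Sigma_k(z)}\ \ge\ \sum_{k=1}^{n-1}k\big(\log\Sigma_{k+1}(z)-\log\Sigma_k(z)\big)\ =\ \sum_{k=1}^{n-1}\log\frac{\Sigma_n(z)}{\Sigma_k(z)}\ \ge\ 0.
\]
I would integrate this over $[y,x]$ for $x_1<y<x$, divide by $n$, obtaining $\tfrac1n(\Sigma_n(x)-\Sigma_n(y))\ge\int_y^x\tfrac1n\sum_{k=1}^{n-1}\log\tfrac{\Sigma_n(z)}{\Sigma_k(z)}\,dz$; then, writing $\Sigma_k(z)=k\cdot(\Sigma_k(z)/k)$, splitting the logarithm, and using $\tfrac1n\sum_{k<n}\log k=\log n-1+o(1)$ together with $\Sigma_k(z)/k\to f(z)$, a short computation shows the integrand converges to $1$ at every $z$ with $f(z)>0$. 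Letting $n\to\infty$, Fatou's lemma (the integrand is nonnegative) and $\Sigma_n(\cdot)/n\to f(\cdot)$ then give $f(x)-f(y)\ge x-y$; since this holds for all $y\in(x_1,x)$ and $f\ge0$, letting $y\to x_1^+$ yields $f(x)\ge x-x_1$.

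The step I expect to be the real obstacle is the positivity input just used: that $f(z)>0$ for every $z\in(x_1,x_0]$, so that the integrand above tends to $1$ on all of $(x_1,x)$. The idea is that sub-linear growth of $\Sigma_n$ is incompatible with boundedness of the $f_n$: if $f$ vanished on some $[a,b]\subset(x_1,x_0]$, then $\Sigma_n/n\to0$ uniformly on $[0,b]$ (it is increasing in $z$), so $(\log f_n)'\ge n/\Sigma_n$ blows up on $[0,b]$, and integrating over intervals $[c,b]$ with $c\in(x_1,a)$ forces $f_n$ — and hence $\sum_k f_k$ — to be finite at such $c$, contradicting the definition of $x_1$. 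Converting ``$\Sigma_n=o(n)$'' into a genuinely summable bound on $f_n$ at $c$ is the one non-routine point: it requires iterating the basic estimate, each integration improving the decay rate; one should also record the harmless degeneracies (some $f_n$ or some $\Sigma_k$ vanishing) and, if needed, reduce to the case $f_0>0$.
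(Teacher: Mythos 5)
Your choice of threshold, $x_1=\sup\{x:\sum_k f_k(x)<\infty\}$, differs from the paper's $x_1=\inf\{x:\limsup_n\log\Sigma_n(x)/\log n\ge1\}$, and this is where the trouble lies. With your definition, \textbf{P1} is indeed immediate (one integration of $f_n'\ge\tfrac nA f_n$ on an interval where $\Sigma_n\le A$), and your Ces\`aro--Fatou version of \textbf{P2} is a correct and rather elegant reformulation of the paper's $T_n$-argument. But the entire difficulty of the lemma has been pushed into the claim you defer to the last paragraph: that $f(z)>0$ for every $z>x_1$, equivalently that there is no intermediate regime where $\Sigma_n\to\infty$ but $\Sigma_n=o(n)$. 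Your proposed fix --- integrate $f_n'\ge\tfrac{n}{\Sigma_n(b)}f_n$ once and then ``iterate, each integration improving the decay rate'' --- does not close. One integration only yields $f_n(c)\le M\exp(-(b-c)/\varepsilon_n)$ with $\varepsilon_n=\Sigma_n(b)/n\to0$ at an unknown rate; if, say, $\Sigma_n(b)\sim n/\log^{(m)}n$ (an $m$-fold iterated logarithm), each further integration removes roughly one iterated logarithm from the decay rate, so no fixed finite number of iterations produces a summable bound on $\sum_k f_k(c)$ (hence no contradiction with the definition of $x_1$) uniformly over the admissible behaviours of $\Sigma_n$ --- and the total length available for stepping down is bounded by $x_0$, so you cannot simply take more and longer steps.

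The paper's definition of $x_1$ is engineered precisely to avoid this: below that threshold one has the \emph{polynomial} gap $\Sigma_n(x)\le n^{1-\alpha}$, for which exactly two integrations suffice (first $f_n\le M e^{-\delta n^{\alpha}}$, which is summable and hence bounds $\Sigma_n$ by a constant; then genuine exponential decay), while above it the quantity $T_n=\tfrac1{\log n}\sum_{i\le n}f_i/i$ converts $\limsup\log\Sigma_n/\log n\ge1$ directly into $f(x)-f(x')\ge x-x'$ with no need to know beforehand that $f>0$. To repair your write-up you should either adopt the paper's threshold outright, or keep yours and prove the missing positivity claim by the paper's route: if $f\equiv0$ on $[0,b]$ with $b>x_1$, then $T_n\to0$ on $[0,b]$, the $T_n$-inequality forces $\limsup\log\Sigma_n(y)/\log n\le0$ for $y<b$, and the two-step integration then gives exponential decay, hence $\sum_k f_k<\infty$, slightly below $y$ --- contradicting $y>x_1$. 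Either way, the $n^{1-\alpha}$ bookkeeping (or something equivalent) is indispensable; the naive iteration is not enough.
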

  
  \begin{proof}
Define 
$$ x_1 := \inf \{ x \, : \, \limsup_{n \rightarrow \infty} \frac{\log \Sigma_n(x)}{\log n} \geq 1 \}. $$

\paragraph{Assume $x<x_1$.} Fix $\delta>0$ and set $x'=x-\delta$ and $x''=x-2\delta$. We will prove that there is exponential decay at $x''$ in two steps. 

First, there exists an integer $N$ and $\alpha>0$  such that $\Sigma_n(x) \leq n ^ {1-\alpha}$ for all $n \geq N$. For such an integer $n$, integrating $f_n' \geq n^{\alpha} f_n$  between $x'$ and $x$ -- this differential inequality follows from \eqref{eq:mlem}, the monotonicity of the functions $f_n$ (and therefore $\Sigma_n$) and the previous bound on $\Sigma_n(x)$ -- implies that
$$ f_n(x') \leq M\exp(-\delta \,n^\alpha),\quad\forall n\ge N.$$

Second, this implies that there exists $\Sigma < \infty$ such that $\Sigma_n(x') \leq \Sigma$ for all $n$. Integrating $f_n' \geq \tfrac{n}{\Sigma} f_n$ for all $n$ between $x''$ and $x'$ -- this differential inequality is again due to \eqref{eq:mlem}, the monotonicity of $\Sigma_n$, and the bound on $\Sigma_n(x')$ -- leads to
\begin{equation*}f_n (x'') \leq M\exp(-\frac{\delta}{\Sigma} \,n),\quad\forall n\ge0.\label{eq:bb}\end{equation*}

\paragraph{Assume $x>x_1$.} For $n\ge 1$, define the function $T_n := \frac{1}{\log n} \sum_{i=1}^{n} \frac{f_i}{i}$. Differentiating $T_n$ and using \eqref{eq:mlem}, we obtain
$$T_n'~=~ \frac{1}{\log n}\, \sum_{i=1}^{n} \frac{f_i'}{i}  ~\stackrel{\eqref{eq:mlem}}{\geq}~ \frac{1}{\log n}\, \sum_{i=1}^{n} \frac{f_i}{\Sigma_{i}} ~\geq~ \frac{\log \Sigma_{n+1}-\log \Sigma_1}{\log n},$$
where in the last inequality we used that for every $i\ge1$,
$$\frac{f_i}{\Sigma_{i}} \geq \int_{\Sigma_{i}}^{\Sigma_{i+1}} \frac{dt}{t}=\log \Sigma_{i+1}-\log \Sigma_{i}.$$ 
For $x'\in(x_1,x)$, using that $\Sigma_{n+1}\ge\Sigma_n$ is increasing and integrating the previous differential inequality between $x'$ and $x$ gives
$$T_n (x) - T_n(x') \geq (x - x')\,  \frac{\log \Sigma_{n} (x')-\log M}{\log n}.$$
Hence, the fact that $T_n(x)$ converges to $f(x)$ as $n$ tends to infinity implies 
\begin{equation*} f (x) -f(x') 
~\geq~  (x - x')  \, \Big[\limsup_{n\rightarrow \infty} \frac{\log \Sigma_n (x')}{\log n}\Big]~\geq~ x- x'.
\end{equation*}
Letting $x'$ tend to $x_1$ from above, we obtain
$
f(x)\ge x-x_1.
$
\end{proof}

We now present the proof of Theorem~\ref{thm:perco}. Also define
$$\theta_n(p)=\bbP_p[0\longleftrightarrow \partial\Lambda_n]\quad\text{and}\quad S_n:=\sum_{k=0}^{n-1}\theta_k.$$
\begin{lemma}\label{cor:OSSS}
For any $n\ge1$, one has
$$\sum_{e\in E_n} {\rm Inf}_e[\mathbbm 1_{0\leftrightarrow\partial\Lambda_n}]\ge \frac{n}{\displaystyle S_n}\cdot \theta_n(1-\theta_n),$$
where $E_n$ is the set of edges with both endpoints in $\Lambda_n$.
  \end{lemma}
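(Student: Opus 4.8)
The plan is to apply the OSSS inequality (Theorem~\ref{thm:OSSS}) to the increasing boolean function $\ff:=\mathbbm1_{0\leftrightarrow\partial\Lambda_n}$ on $\{0,1\}^{E_n}$, together with a well-chosen algorithm $\TT$ for which every revealment $\delta_e(\TT)$ is bounded by $S_n/n$ (up to the factor $p(1-p)$ which we absorb, since the claimed bound has no $p(1-p)$; indeed $p(1-p)\le 1/4\le 1$, but more carefully the OSSS bound is $\mathrm{Var}_p(\ff)\le p(1-p)\sum_e\delta_e{\rm Inf}_e$ and $\mathrm{Var}_p(\ff)=\theta_n(1-\theta_n)$, so it suffices to produce an algorithm with $\max_e\delta_e(\TT)\le S_n/(np(1-p))$ — or simply note the statement as written should carry a $p(1-p)$, and I will derive the clean bound $\sum_e{\rm Inf}_e\ge \frac{n}{S_n}\theta_n(1-\theta_n)$ by averaging over a family of algorithms).

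The key idea is to use a \emph{random} algorithm that discovers the cluster of a box boundary from the inside. For $k\in\{0,1,\dots,n-1\}$, let $\TT_k$ be the algorithm that explores the connected component of $\partial\Lambda_k$ (the inner box boundary) within $\Lambda_n$: it reveals edges incident to the current discovered cluster of $\partial\Lambda_k$, in breadth-first order, stopping once it has determined whether $\partial\Lambda_k\leftrightarrow\partial\Lambda_n$. Crucially, $\{0\leftrightarrow\partial\Lambda_n\}$ is \emph{not} what $\TT_k$ computes directly; instead one observes that $0\leftrightarrow\partial\Lambda_n$ iff $0\leftrightarrow\partial\Lambda_k$ \emph{and} $\partial\Lambda_k\leftrightarrow\partial\Lambda_n$ — hmm, this is false in general. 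Let me instead recall the actual DRT construction: one runs the algorithm that reveals the cluster of $\partial\Lambda_k$ inside the \emph{annulus} $\Lambda_n\setminus\Lambda_k$, determining whether $\partial\Lambda_k\leftrightarrow\partial\Lambda_n$; an edge $e$ at distance (roughly) $d$ from $\partial\Lambda_k$ is revealed only if a point at distance about $|d|$ from one of its endpoints is connected to $\partial\Lambda_k$-side, giving $\delta_e(\TT_k)\le \theta_{|d(e)-k|}$ or so. Then averaging over $k=0,\dots,n-1$ yields $\frac1n\sum_k\delta_e(\TT_k)\le \frac1n\sum_{j=0}^{n-1}\theta_j=\frac{S_n}{n}$ uniformly in $e$; applying OSSS to each $\TT_k$ and averaging produces $\mathrm{Var}_p(\ff)\le p(1-p)\frac{S_n}{n}\sum_e{\rm Inf}_e(\ff)$, which rearranges to the claim (with the $p(1-p)$ as discussed).

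So the steps, in order, are: (1) fix the target function $\ff=\mathbbm1_{0\leftrightarrow\partial\Lambda_n}$ and recall $\mathrm{Var}_p(\ff)=\theta_n(1-\theta_n)$; (2) for each $k\in\{0,\dots,n-1\}$ define the exploration algorithm $\TT_k$ revealing the cluster of $\partial\Lambda_k$ in $\Lambda_n$ and check it indeed determines $\ff$ (using that $0\leftrightarrow\partial\Lambda_n$ is measurable with respect to this exploration once one starts from $\partial\Lambda_k$ and also tracks whether $0$ lies in the explored cluster — here one should really run the cluster exploration of $\{0\}\cup\partial\Lambda_k$ or argue directly; the cleanest is the DRT choice of exploring from $\partial\Lambda_k$ outward and separately noting the inner part is handled by the $k=0$ term); (3) bound the revealment: an edge $e\in E_n$ with $\|e\|_\infty$ near some radius is touched by $\TT_k$ only if its endpoint is connected within the explored region to $\partial\Lambda_k$, and such a connection forces a crossing of an annulus of width $\approx |\,\|e\|_\infty-k\,|$, hence $\delta_e(\TT_k)\le\theta_{g(e,k)}$ with $\sum_{k=0}^{n-1}\theta_{g(e,k)}\le S_n$; (4) apply Theorem~\ref{thm:OSSS} to $\TT_k$, sum over $k$, divide by $n$, and use $\frac1n\sum_k\delta_e(\TT_k)\le S_n/n$ to conclude.

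The main obstacle is step (3): setting up the exploration algorithm so that its revealment at a given edge is genuinely controlled by a one-arm probability $\theta_j$ with the radii $j$ ranging over $\{0,1,\dots,n-1\}$ as $k$ varies, and doing the bookkeeping so the average over $k$ telescopes exactly into $S_n/n$. One must be careful that the algorithm $\TT_k$ is honestly defined as a deterministic decision procedure on the edge set (breadth-first exploration of a cluster is standard but needs the stopping rule $\tau$ to match the definition in the text), and that the event $\{0\leftrightarrow\partial\Lambda_n\}$ is correctly computed — the trick is that connection of $0$ to $\partial\Lambda_n$ is detected as soon as the exploration of $\partial\Lambda_k$'s cluster either reaches $\partial\Lambda_n$ while passing through $0$, or one runs the $k=0$ exploration which starts at $0$ itself. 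Once the revealment estimate $\delta_e(\TT_k)\le\theta_{|\,\mathrm{dist}(e,\partial\Lambda_k)\,|}$ is in hand, the rest is the routine averaging already sketched.
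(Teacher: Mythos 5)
Your proposal follows essentially the same route as the paper: for each $k$ one runs the algorithm exploring the full cluster of $\partial\Lambda_k$ inside $\Lambda_n$, bounds the revealment of an edge $e=\{u,v\}$ by $\bbP_p[u\leftrightarrow\partial\Lambda_k]+\bbP_p[v\leftrightarrow\partial\Lambda_k]\le\theta_{|k-d(u,0)|}+\theta_{|k-d(v,0)|}$, sums over $k$ to get $O(S_n)$, and absorbs the resulting constant together with the $p(1-p)$ from the OSSS inequality using $4p(1-p)\le1$, exactly as the paper does. The one point you should firm up is your step (2): the annulus-only exploration you first describe does \emph{not} determine $\mathbbm1_{0\leftrightarrow\partial\Lambda_n}$, but the version you settle on (exploring the cluster of $\partial\Lambda_k$ throughout all of $\Lambda_n$, inward and outward) does, for the simple reason that any open path from $0$ to $\partial\Lambda_n$ must pass through $\partial\Lambda_k$ and hence lies entirely in that explored cluster — no separate appeal to the $k=0$ term is needed.
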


The proof is based on  Theorem~\ref{thm:OSSS} applied to a well chosen algorithm determining the boolean function $\ff:=\mathbbm 1_{0\leftrightarrow\partial\Lambda_n}$. 

One may simply choose the trivial algorithm checking every edge of the box $\Lambda_n$. Unfortunately, the revealment of the algorithm being 1 for every edge, the OSSS inequality will not bring us  interesting information. A slightly better  algorithm would be provided by the algorithm discovering the connected component of the origin ``from the inside''. Edges far from the origin would then be revealed by the algorithm if (and only if) one of their endpoints is connected to the origin. This provides a good bound for the revealment of edges far from the origin, but edges close to the origin are still revealed with large probability. In order to avoid this last fact, we will rather choose a family of algorithms discovering the connected components of $\partial\Lambda_k$ for $1\le k\le n$ and observe that the average of their revealment for a fixed edge will always be small.

\begin{figure}
\begin{center}\includegraphics[width=0.65\textwidth]{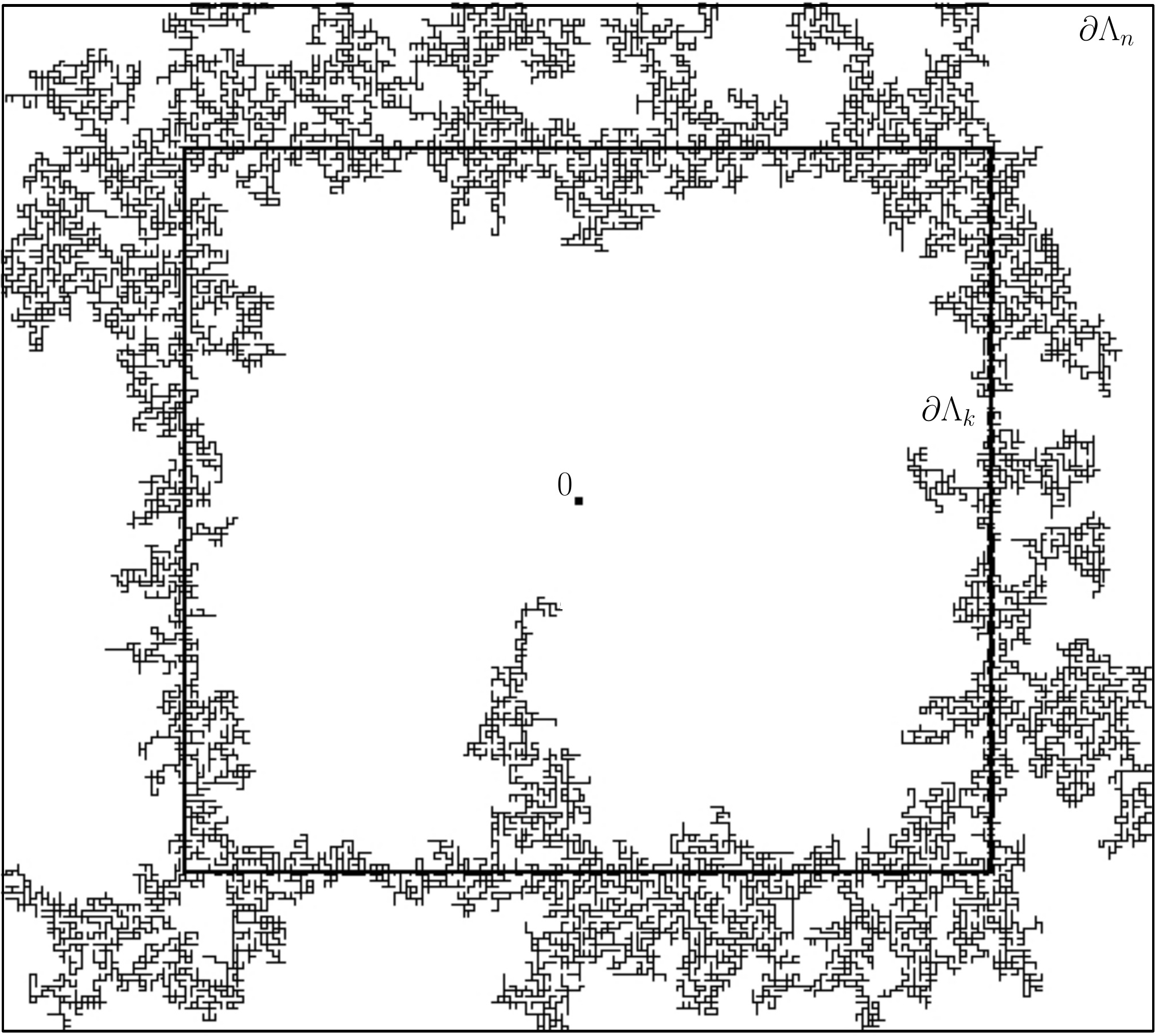}
\caption{\label{fig:algo} A realization of the connected components intersecting $\partial\Lambda_k$. Every edge having one endpoint in this set has been revealed by the algorithm. Furthermore in this specific case, we know that $0$ is not connected to the boundary of $\partial\Lambda_n$. }\end{center}
\end{figure}

\begin{proof} 
For any $k\in[n]$, we wish to construct an algorithm $\TT$ determining $\mathbbm 1 _{0\leftrightarrow\partial\Lambda_n}$ such that for each $e=\{u,v\}$,
\begin{equation}\delta_e(\TT)\le \bbP_p[u\longleftrightarrow \partial\Lambda_k]+\bbP_p[v\longleftrightarrow \partial\Lambda_k].\label{eq:zzz}\end{equation}
Note that this would conclude the proof since we obtain the target inequality by applying Theorem~\ref{thm:OSSS} for each $k$ and then summing on $k$. As a key, we use that for $u\in \Lambda_n$,
\begin{align*}\sum_{k=1}^n\bbP_p[u\longleftrightarrow \partial\Lambda_k]
&~\le~ \sum_{k=1}^n  \bbP_p[u\longleftrightarrow \partial\Lambda_{|k-d(u,0)|}(u)]~\le~ 2S_n.
\end{align*}

We describe the algorithm $\TT$, which corresponds first to an exploration of the connected components in $\Lambda_n$ intersecting $\partial\Lambda_k$ that does not reveal any edge with both endpoints outside these connected components, and then to a simple exploration of the remaining edges.
 
More formally, 
we define $\e$ (instead of the collection of decision rules $\phi_t$) using two  growing sequences  $\partial\Lambda_k=V_0\subset V_1\subset \cdots\subset V$ and $\emptyset=F_0\subset F_1\subset\cdots\subset E_n$ (recall that $E_n$ is the set of edges between two vertices within distance $n$ of the origin) that should be understood as follows: at step $t$, $V_t$ represents the set of vertices that the algorithm found to be connected to $\partial \Lambda_k$, and $F_t$ is the set of explored edges discovered by the algorithm until time $t$.

Fix an ordering of the edges in $E_n$. Set $V_0=\partial\Lambda_k$ and $F_0=\emptyset$. Now, assume that $V_t\subset V$ and $F_t\subset E_n$ have been constructed and distinguish between two cases:\begin{itemize}[noitemsep,nolistsep]
\item If there exists an edge $e=xy\in E_n\setminus F_t$ with $x\in V_t$ and $y\notin V_t$ (if more than one exists, pick the smallest one for the ordering), then set $\mathbf i_{t+1}=e$, $F_{t+1}=F_t\cup\{e\}$ and set $$V_{t+1}:=\begin{cases}V_t\cup\{y\}&\text{ if }\omega_e=1\\ V_t&\text{ otherwise}.\end{cases}$$
\item If $e$ does not exist, set $\mathbf i_{t+1}$ to be the smallest $e\in E_n\setminus F_t$ (for the ordering) and set $V_{t+1}=V_t$ and $F_{t+1}=F_t\cup\{e\}$.\end{itemize}
As long as we are in the first case, we are still discovering the connected components of $\partial\Lambda_k$. Also, as soon as we are in the second case, we remain in it. The fact that $\tau$ is not greater than the last time we are in the first case gives us \eqref{eq:zzz}.

Note that $\tau$ may a priori be strictly smaller than the last time we are in the first case (since the algorithm may discover a path of open edges from 0 to $\partial \Lambda_n$ or a family of closed edges disconnecting the origin from $\partial\Lambda_n$ before discovering the whole connected components of $\partial\Lambda_k$).
\end{proof}

We are now in a position to provide our alternative proof of exponential decay. Fix $n\ge1$. Lemma~\ref{cor:OSSS} together with the Russo-Margulis formula gives
\begin{equation*}\theta_n'=\sum_{e\in E_n}{\rm Inf}_e(\mathbbm 1_{0\leftrightarrow \partial\Lambda_n}) ~\ge~\tfrac{n}{4p(1-p)S_n}\cdot \theta_n(1-\theta_n)\ge \tfrac{n}{S_n} \cdot\theta_n(1-\theta_n).\end{equation*}
Fix $p_0\in(p_c,1)$ and observe that for $p\le p_0$, $1-\theta_n(p)\ge1-\theta_1(p_0)>0$. Then, apply Lemma~\ref{lem:technical} to $f_n=\tfrac1{1-\theta_1(p_0)}\theta_n$. 
Overall, we proved the existence of some $\tilde p_c\in[0,p_0]$ such that 
\begin{enumerate}[noitemsep]
\item\label{item:1}For $p<\tilde p_c$,  there exists $c_p>0$ such that for all $n\ge1$,
$\bbP_p[0\leftrightarrow \partial\Lambda_n]\le \exp(-c_pn)$.
\item\label{item:2} There exists $c>0$ such that for $p>\tilde p_c$, $\bbP_p[0\leftrightarrow \infty]\ge
 c (p-\tilde p_c)$.
\end{enumerate}
Since $p_0$ was chosen larger than $p_c$, $\tilde p_c$ has no choice but to be equal to $p_c$, and the proof of Theorem~\ref{thm:perco} therefore follows.
\section{Generalizations to monotonic measures}

\subsection{Random-cluster model}
Bernoulli percolation is maybe the most classical example of percolation model, but it is far from being the only one. Percolation models appear in various areas of statistical physics as natural models associated with random walks and spin systems. While Bernoulli percolation is a product measure, and the study of random variables in this context boils down to the study of boolean functions on product spaces,  more general percolation models are intrinsically not product measures, and cannot therefore be studied via boolean functions on such spaces. 

Recently, the theory of boolean functions has undergone some progress with the study of monotonic measures. As a consequence, certain results valid for product measures extend to this context, enabling us to apply the proofs of the previous sections to more general percolation models. This discovery led to an explosion of results on these  percolation models, and we propose to discuss some of the progress here. 

Below, we  focus on the {\em random-cluster model}, which is a percolation model introduced by Fortuin and Kasteleyn in \cite{ForKas72} (it is sometimes referred to as the {\em Fortuin-Kasteleyn percolation}) as a unification of different models of statistical physics satisfying series/parallel laws when modifying the underlying graph.
Let $G$ be a finite subgraph of $\bbZ^d$ with vertex-set $V$ and edge-set $E$. Write
$|\omega|=\sum_{e\in E}\omega_e$ and
let $k(\omega)$ denote the number of connected components in the graph $\omega$. 
The probability measure 
$\phi^0_{G,p,q}$ of the random-cluster model on $G$ with {\em edge-weight} 
$p\in[0,1]$, {\em connected component-weight} $q>0$ and free boundary conditions is defined by
\begin{equation}
  \label{probconf}
  \phi_{G,p,q}^0 [\omega] :=
  \frac {p^{|\omega|}(1-p)^{|E|-|\omega|}q^{k(\omega)}}
  {Z_{G,p,q}}
\end{equation}
for every configuration $\omega\in\{0,1\}^{E}$. The constant $Z_{G,p,q}$ is a 
normalizing constant, referred to as the \emph{partition function}, defined in such a way that the sum over all configurations equals 1. For $q\ge1$, the model can be extended to infinite volume by taking the limit as $G$ tends to $\bbZ^d$ of the measures $\phi^0_{G,p,q}$. We denote the infinite-volume measure by $\phi_{\bbZ^d,p,q}^0$.

For $q=1$, the random-cluster model corresponds to Bernoulli percolation.
For integers $q\ge2$, the model is related to Potts models; see below.
For $p\rightarrow 0$ and $q/p\rightarrow0$, the model is connected to electrical networks via Uniform Spanning Trees.

\subsection{Computation of the critical point for random-cluster models on $\bbZ^2$}\label{sec:2.3}

The random-cluster model also undergoes a phase transition at a certain parameter $p_c$ below which the probability that $\omega$ contains an  infinite connected component is zero, and above which this probability is 1. For the square lattice,  the value of this critical point was predicted in physics over forty years ago. Until recently, only the case of $q=1$, $q=2$ and very large values of $q$ were proved. The following theorem finally answers the conjecture for the whole range of parameters $q\ge1$ (the condition $q\ge1$ is not very restrictive, since the behavior of the model is much more tricky for $q<1$ -- for instance some averages of increasing boolean functions are not monotonic in $p$).

\begin{theorem}[Beffara, DC \cite{BefDum12} ]\label{thm:p_c FK}
For the random-cluster model on $\bbZ^2$ with $q\ge1$, the critical point $p_c$ is equal to $\sqrt q/(1+\sqrt q)$.
\end{theorem}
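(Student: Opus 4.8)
\textbf{Proof proposal for Theorem~\ref{thm:p_c FK}.}

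The plan is to adapt the two-pronged strategy used for Bernoulli percolation, but now at the level of the random-cluster measure. The key structural input that must be invoked is that the random-cluster model with $q\ge1$ is a \emph{monotonic measure} (it satisfies the FKG lattice condition and positive association), and that the general sharp-threshold machinery — both Theorem~\ref{maximum_influence} (in the OSSS form, Theorem~\ref{thm:OSSS}, which is the version that survives the passage to monotonic measures) and the Margulis-Russo differentiation formula — extends to such measures, possibly with constants depending on $q$ but not on the graph. Granting this, the skeleton is: (i) establish that at the self-dual point $p_{\rm sd}(q):=\sqrt q/(1+\sqrt q)$ crossing probabilities of squares stay bounded away from $0$ and $1$, uniformly in $n$; (ii) promote this to a full Russo-Seymour-Welsh box-crossing statement at $p_{\rm sd}$; (iii) use the sharp-threshold inequality to show that for $p>p_{\rm sd}$ the crossing probabilities of rectangles converge to $1$ polynomially fast, hence $\phi^1_{\bbZ^2,p,q}[0\leftrightarrow\infty]>0$ and $p_c\le p_{\rm sd}$; (iv) by the same argument applied to the dual model (which is a random-cluster model with the same $q$ and dual parameter $p^*$ satisfying $\tfrac{p p^*}{(1-p)(1-p^*)}=q$), conclude $p_c\ge p_{\rm sd}$; the self-dual point is exactly the fixed point of $p\mapsto p^*$.

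For step (i), the duality computation is the analogue of the Proposition giving $\bbP_{1/2}[\calH(n-1,n)]=\tfrac12$: planar duality sends $\phi^0_{G,p,q}$ to (essentially) $\phi^1_{G^*,p^*,q}$ with $\tfrac{p p^*}{(1-p)(1-p^*)}=q$, and the complement of a horizontal crossing is a dual vertical crossing; at $p=p^*=p_{\rm sd}$ the $\pi/2$-rotation symmetry forces the crossing probability of a near-square to be bounded away from $0$ and $1$ (one gets an explicit constant once one controls the discrepancy between free and wired boundary conditions, which costs only a bounded factor by finite-energy / comparison between boundary conditions). For step (ii), I would follow one of the RSW-type arguments now available for random-cluster models — the conditioning-on-the-highest-crossing argument sketched in the proof of Theorem~\ref{thm:RSW} goes through using positive association and the fact that conditioning on a crossing and on the region above it leaves a random-cluster measure below with boundary conditions one can dominate; the new subtlety compared to Bernoulli is that the conditional measure is not a product, so one must invoke monotonicity in boundary conditions rather than pure independence.

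For step (iii) I would run exactly the argument in the proof of the Proposition preceding Theorem~\ref{thm:main}: take $\ff=\mathbbm 1_{\calH(2n,n)}$, bound each influence by a dual one-arm probability to distance $n/2$ (which is polynomially small at $p_{\rm sd}$ by the box-crossing property of the dual model established in step (ii)), feed this into the symmetric sharp-threshold statement Theorem~\ref{symmetric_sharp_threshold} — valid here because the crossing event is symmetric under a transitive group action on the edge set of the rectangle — to get $f'\ge c\log n\cdot{\rm Var}_p(\ff)$, integrate from $p_{\rm sd}$ upward, and deduce $\phi_{p,q}[\calH(2n,n)]\to1$ for $p>p_{\rm sd}$, hence an infinite cluster by Borel-Cantelli combining crossings of dyadic rectangles via FKG. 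Symmetrically, applying this to the dual model shows the primal model has no infinite cluster for $p<p_{\rm sd}$, giving the matching bound. The main obstacle is step (ii): establishing RSW for general $q\ge1$ without assuming $p_c$ is a genuine difficulty (for Bernoulli one has full independence; here one has only positive association and boundary-condition monotonicity), and making the constant in Theorem~\ref{maximum_influence}/Theorem~\ref{thm:OSSS} uniform in the (non-product, $q$-dependent) measure is the technical heart of the whole argument — everything else is the Bernoulli proof with "independence" replaced by "FKG plus comparison of boundary conditions."
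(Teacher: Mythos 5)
Your proposal follows essentially the same route as the paper (and as \cite{BefDum12}): self-duality at $p_{\rm sd}=\sqrt q/(1+\sqrt q)$, an RSW theory for the dependent measure, and a sharp-threshold inequality valid for monotonic measures (Graham--Grimmett, or the monotonic OSSS of \cite{DumRaoTas17}), combined with planar duality to get both bounds on $p_c$. The only place where your sketch is more optimistic than the actual argument is the claim that the free/wired boundary-condition discrepancy ``costs only a bounded factor by finite-energy'' --- obtaining crossing estimates uniform in boundary conditions is one of the genuinely delicate points of \cite{BefDum12} --- but you correctly flag the RSW step as the technical heart, so this is a matter of emphasis rather than a gap.
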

(See also the alternative proofs \cite{DumMan16,DumRaoTas16}.) Note that for $q=1$, we recover the previous theorem with $p_c$ equal to $1/2$.
Similarly, we were recently able to prove the following generalization of Theorem~\ref{thm:perco}.
 \begin{theorem}\label{thm:RCM} Consider the random-cluster model on $\bbZ^d$ with $q\ge1$.
\begin{enumerate}[noitemsep]
\item\label{item:1}For $p<p_c$,  there exists $c_p>0$ such that for all $n\ge1$,
$\phi^0_{\bbZ^d,p,q}[0\leftrightarrow \partial\Lambda_n]\le \exp(-c_pn)$.
\item\label{item:2} There exists $c>0$ such that for $p>p_c$, $\phi^0_{\bbZ^d,p,q}[0\leftrightarrow \infty]\ge
 c (p-p_c)$.
\end{enumerate}
\end{theorem}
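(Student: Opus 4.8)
The plan is to mimic the OSSS-based proof of Theorem~\ref{thm:perco} as closely as possible, replacing Bernoulli percolation by the random-cluster measure $\phi^0_{\bbZ^d,p,q}$ with $q\ge1$. The logical skeleton has three ingredients: (i) a Russo-Margulis type differentiation formula for $p\mapsto \phi^0_{G,p,q}[\mathbbm 1_A]$ with $A$ increasing, expressing the derivative as a weighted sum of influences; (ii) a version of the OSSS inequality valid for the random-cluster measure, producing a lower bound on the sum of influences of $\mathbbm 1_{0\leftrightarrow\partial\Lambda_n}$ of the form $\frac{n}{S_n}\theta_n(1-\theta_n)$ as in Lemma~\ref{cor:OSSS}; and (iii) the purely analytic Lemma~\ref{lem:technical}, which is model-independent and can be quoted verbatim once the differential inequality $\theta_n'\ge \frac{n}{CS_n}\theta_n(1-\theta_n)$ is established. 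Combining (i)--(iii) exactly as at the end of Section~3.2 yields a threshold $\tilde p_c$ with exponential decay below and a linear lower bound on $\phi^0_{\bbZ^d,p,q}[0\leftrightarrow\infty]$ above; identifying $\tilde p_c$ with $p_c$ is then immediate.

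The first step is to set up the comparison and monotonicity properties of the random-cluster model: for $q\ge1$ the measure $\phi^0_{G,p,q}$ satisfies the FKG inequality (because $q\ge1$ makes the weight $q^{k(\omega)}$ log-supermodular), it is stochastically increasing in $p$, it satisfies finite-energy bounds, and it has well-defined infinite-volume limits. I would then establish the differentiation formula: differentiating $\log Z_{G,p,q}$ and $\phi^0_{G,p,q}[A]$ in $p$ gives, for increasing $A$,
\begin{equation*}
\frac{d}{dp}\phi^0_{G,p,q}[A]=\frac1{p(1-p)}\sum_{e\in E}\big(\phi^0_{G,p,q}[A\mid\omega_e=1]-\phi^0_{G,p,q}[A]\big)\,\mathrm{cov\text{-}type\ term},
\end{equation*}
which, after the same bookkeeping as in Lemma~\ref{lem:dif}, takes the form $\frac{d}{dp}\phi^0_{G,p,q}[A]\ge \frac{c}{p(1-p)}\sum_e \mathrm{Inf}_e(A)$ for a suitable notion of influence $\mathrm{Inf}_e(A)=\phi^0_{G,p,q}[A\mid \omega_e=1]-\phi^0_{G,p,q}[A\mid \omega_e=0]$; here the constant absorbs the dependence on $q$ through finite energy. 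Next I would prove the OSSS inequality for $\phi^0_{G,p,q}$. The coupling argument in the proof of Theorem~\ref{thm:OSSS} uses independence of the coordinates in an essential way, so for the random-cluster model one instead runs the exploration algorithm and reveals edges one at a time; the key replacement is that, conditionally on the revealed edges, the law of the unrevealed part is again a random-cluster measure on a modified graph (with boundary conditions encoding the revealed connections), so one can resample and run the telescoping sum $\sum_t \phi[|\ff(\omega^t)-\ff(\omega^{t-1})|]$ against the conditional influence at the newly queried edge. This yields $\mathrm{Var}(\ff)\le C\sum_e\delta_e(\TT)\,\mathrm{Inf}_e(\ff)$ with $C=C(p,q)$. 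Finally I would run the same family of algorithms as in Lemma~\ref{cor:OSSS} — exploring the clusters of $\partial\Lambda_k$ for $1\le k\le n$ and averaging revealments — which gives $\delta_e(\TT)\le \phi^0[u\leftrightarrow\partial\Lambda_k]+\phi^0[v\leftrightarrow\partial\Lambda_k]$ and hence, summing over $k$, the bound $\sum_{e\in E_n}\mathrm{Inf}_e(\mathbbm 1_{0\leftrightarrow\partial\Lambda_n})\ge \frac{n}{CS_n}\theta_n(1-\theta_n)$.

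The main obstacle is step (ii), the OSSS inequality for a non-product measure. Two points require genuine care: first, the algorithmic coupling must be carried out with respect to a \emph{monotonic} (strong-FKG / spatial-Markov) measure rather than an i.i.d.\ one, so one needs the fact that conditioning the random-cluster measure on the states of a set of edges yields another random-cluster measure on a contracted/deleted graph with appropriate boundary conditions, together with a monotone coupling so that the resampled configurations can be compared; second, the finite-energy constants $C(p,q)$ must be uniform in the volume $G$ so that the differential inequality survives the infinite-volume limit — this is where $q\ge1$ is used repeatedly and where $q<1$ genuinely fails. Once these are in place, the passage to infinite volume (monotonicity of $\phi^0_{G,p,q}$ in $G$, continuity of $\theta_n$ in $p$), the application of Lemma~\ref{lem:technical} to $f_n=\theta_n/(1-\theta_1(p_0))$ on $[0,p_0]$ for $p_0\in(p_c,1)$, and the identification $\tilde p_c=p_c$ all go through exactly as in the Bernoulli case, completing the proof.
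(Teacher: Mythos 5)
Your overall strategy is exactly the one the paper follows: the paper reduces Theorem~\ref{thm:RCM} to (a) a Russo--Margulis-type differentiation formula whose constants are controlled by finite energy, (b) an OSSS inequality valid for monotonic measures (stated as the Duminil-Copin--Raoufi--Tassion theorem in Section~5.2), and (c) the model-independent Lemma~\ref{lem:technical}, run through the same exploration algorithms as in Lemma~\ref{cor:OSSS}. So your three-ingredient skeleton, the averaging over $k$ of the cluster-of-$\partial\Lambda_k$ algorithms, and the identification $\tilde p_c=p_c$ all match the intended argument.

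The one place where your write-up has a genuine gap is your justification of ingredient (ii). You argue that because the conditional law of the unrevealed edges is again a random-cluster measure on a modified graph (the domain Markov property), ``one can resample and run the telescoping sum.'' Taken literally this fails: the product-case proof of Theorem~\ref{thm:OSSS} constructs an interpolating sequence $\omega^0,\dots,\omega^n$ in which each $\omega^t$ has the correct marginal law \emph{and} $\omega^t$, $\omega^{t-1}$ differ only at the single coordinate $\mathbf i_t$. For a dependent measure, swapping one coordinate with that of an independent copy destroys the joint law of the remaining coordinates, so the two properties cannot be obtained simultaneously by naive resampling, and the conditional expectation of $|\ff(\omega^t)-\ff(\omega^{t-1})|$ no longer reduces to a single-edge influence. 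This is precisely why the paper warns that the proofs of the monotonic versions ``are not immediate'' and are obtained by \emph{encoding the monotonic measure via iid random variables} (so that the product-space coupling can be run on the encoding variables), deferring the details to \cite{DumRaoTas17} and \cite{GraGri06}. You correctly flag this step as the main obstacle and gesture at a monotone coupling, but you do not supply the encoding mechanism that makes the telescoping argument legitimate; as written, step (ii) is asserted rather than proved. The remaining points you raise (finite-energy uniformity in the volume, monotonicity in $p$ and in $G$, passage to infinite volume) are handled correctly.
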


The key steps of the proofs of these two theorems are generalizations of Theorems~\ref{thm:RSW}, \ref{maximum_influence} and \ref{thm:OSSS}. 
The generalization of Theorem~\ref{thm:RSW} requires the development of a new RSW theory enabled to tackle percolation models with dependency. This theory led to a number of new applications on these models, including a precise description of the critical behavior (see \cite{DumSidTas13,DumGanHar16,CheDumHon14} and \cite{Dum15,Dum17} for reviews). We chose not to discuss this here, and focus on the generalizations of Theorems~\ref{maximum_influence} and \ref{thm:OSSS}. 

The random-cluster measure satisfies positive association, a property that enables us to study it using probabilistic techniques. In particular, the measure is monotonic in the following sense. A measure $\mu$ on $\{0,1\}^E$ is {\em monotonic} if for any $e\in E$, 
any $F \subset E$, and any $\xi,\zeta\in \{0,1\}^{F}$ satisfying $\xi \le  \zeta $, $\mu[\omega_e=\xi_e,\forall e\in F]>0$ and  $\mu[\omega_e=\zeta_e,\forall e\in F]>0$,   
$$\mu[\omega_e=1 \: |\: \omega_e=\xi_e,\forall e\in F]\le \mu[\omega_e=1 \: |\: \omega_e=\zeta_e,\forall e\in F].$$
As mentioned briefly above, (some of) the theory of boolean functions extends to the case of monotonic measures instead of product measures. In particular, the following two theorems were proved.

\begin{theorem}[Graham, Grimmett \cite{GraGri06}]
There exists a constant $c>0$ such that for any monotonic measure $\mu$ on $[n]$, the following holds. For any increasing boolean function $\ff:\{0,1\}^n\rightarrow\{0,1\}$,
$${\rm Var}_\mu(\ff)\le \frac{c}{\min_i{\rm Var}_\mu[\omega_i]}\,\sum_{i=1}^n \frac{{\rm Inf}_i^\mu[\ff]}{\log (1/{\rm Inf}_i^\mu[\ff])},$$
where ${\rm Inf}_i^\mu(\ff):=\mu[\ff|\omega_i=1]-\mu[\ff|\omega_i=0]$. 
\end{theorem}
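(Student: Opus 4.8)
The plan is to mimic the Fourier-analytic proof of Theorem~\ref{maximum_influence}, but to replace the product-measure ingredients—namely Parseval's identity, the Bonami--Beckner hypercontractive estimate, and the identity $\|\widehat{\nabla_i\ff}\|_{1+t^2}={\rm Inf}_i[\ff]^{1/(1+t^2)}$—by tools available for monotonic measures. The natural strategy, following Graham--Grimmett, is to encode the monotonic measure $\mu$ on $\{0,1\}^n$ as the image of a \emph{product} measure under a coordinate-wise monotone coupling, and then transport the product-space inequality through this coupling. Concretely, one builds random variables $X_1,\dots,X_n$ on a product probability space (for instance $[0,1]^n$ with Lebesgue measure) such that $(X_1,\dots,X_n)$ has law $\mu$ and such that each $X_i$ is an increasing function of its own coordinate given the others; this is the standard "revealment/Markov chain" construction exploiting precisely the monotonicity hypothesis $\mu[\omega_e=1\mid\omega_e=\xi_e,\,\forall e\in F]\le\mu[\omega_e=1\mid\omega_e=\zeta_e,\,\forall e\in F]$ for $\xi\le\zeta$.

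First I would set up this coupling carefully: process the coordinates in a fixed order, and at step $i$ define $X_i=\mathbbm 1[U_i\le \rho_i]$ where $U_i$ is an independent uniform variable and $\rho_i$ is the conditional probability $\mu[\omega_i=1\mid \omega_{<i}=X_{<i}]$. Monotonicity of $\mu$ makes $\rho_i$ (hence $X_i$) a coordinatewise increasing function of $(X_1,\dots,X_{i-1})$, and a short induction upgrades this to: $\ff(X_1,\dots,X_n)$ is an increasing function $g$ of $(U_1,\dots,U_n)\in[0,1]^n$ whenever $\ff$ is increasing. The quantity $\min_i{\rm Var}_\mu[\omega_i]$ enters because it controls, uniformly, how far $\rho_i$ stays from $0$ and $1$; this is what prevents the change of variables from a uniform $U_i$ to the relevant "effective" parameter from blowing up, and it is the analogue of the $\log\tfrac1{p(1-p)}$ prefactor in Theorem~\ref{maximum_influence}.

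Next I would apply the BKKKL/Talagrand inequality (Theorem~\ref{maximum_influence}, in its continuous $[0,1]^n$ form, which is the version actually proved in \cite{BouKahKal92}) to the increasing function $g$ on $[0,1]^n$ with the uniform measure, giving ${\rm Var}(g)\le c\sum_i \mathrm{Inf}_i[g]/\log(1/\mathrm{Inf}_i[g])$. Then the work is to compare the two sides with their $\mu$-counterparts: ${\rm Var}(g)={\rm Var}_\mu(\ff)$ is immediate, while relating the geometric influence $\mathrm{Inf}_i[g]$ (the measure of the "pivotal slab" for $U_i$) to $\mathrm{Inf}_i^\mu[\ff]=\mu[\ff\mid\omega_i=1]-\mu[\ff\mid\omega_i=0]$ requires a monotonicity/FKG-type argument showing that pivotality for $X_i$ at $U_i$ is (up to the $\mathrm{Var}_\mu[\omega_i]$ factor controlling the width of the slab) comparable to $\mathrm{Inf}_i^\mu[\ff]$, together with the elementary monotonicity of $x\mapsto x/\log(1/x)$ on $(0,1)$ to absorb constants. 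Summing over $i$ and tracking the $\min_i{\rm Var}_\mu[\omega_i]$ through the change of variables yields the stated bound.

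The main obstacle I expect is precisely the control of the influence term through the coupling: $g$ depends on $U_i$ not only directly (through $X_i$) but \emph{also} indirectly, because changing $U_i$ changes $X_i$ and hence the thresholds $\rho_j$ for all $j>i$, so $g$ is a genuinely complicated function of $U_i$ rather than a simple threshold. One must argue that this indirect dependence does not inflate $\mathrm{Inf}_i[g]$ beyond a constant multiple of (a suitable normalization of) $\mathrm{Inf}_i^\mu[\ff]$—this is where monotonicity of $\mu$ and the increasingness of $\ff$ are used in an essential and slightly delicate way, and where the factor $1/\min_i \mathrm{Var}_\mu[\omega_i]$ is unavoidable. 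A secondary technical point is that the $[0,1]^n$ version of Theorem~\ref{maximum_influence} is needed (the discrete Bonami--Beckner route used in the excerpt's proof does not apply directly to the uniform measure on $[0,1]^n$), but this is available from \cite{BouKahKal92,GriJanNor15} and may be quoted.
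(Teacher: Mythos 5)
Your strategy is exactly the one the paper itself invokes: the text gives no proof of this theorem, stating only that it is a ``combination of the strategy for product spaces with encoding of monotonic measures via iid random variables'' and deferring to \cite{GraGri06}, and your sequential coupling $X_i=\mathbbm 1[U_i\le\rho_i]$ followed by the $[0,1]^n$ version of the BKKKL/Talagrand inequality is precisely that encoding. The one step you flag but do not carry out---controlling the indirect dependence of $g$ on $U_i$ through the downstream thresholds $\rho_j$, $j>i$, so that $\mathrm{Inf}_i[g]$ is comparable to $\mathrm{Inf}_i^\mu[\ff]$ up to the $\min_i\mathrm{Var}_\mu[\omega_i]$ factor---is indeed the crux, and it is resolved in \cite{GraGri06} by the monotonicity/FKG argument you anticipate, so your outline is consistent with (and more detailed than) what the paper provides.
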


\begin{theorem}[DC, Raoufi, Tassion \cite{DumRaoTas17}]
Consider a monotonic measure $\mu$ on $[n]$. Fix an increasing boolean function $\ff:\{0,1\}^n\longrightarrow \{0,1\}$ and an algorithm $\TT$. We have
\begin{equation}
 \mathrm{Var}_\mu(\ff)~\le~   \sum_{i=1}^n  \delta_i(\TT) \, {\rm Inf}_i^\mu(\ff),
  \end{equation}
  where 
$
\delta_i(\TT)
$ is defined as in the product case.
\end{theorem}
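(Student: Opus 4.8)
The plan is to transcribe the coupling proof of Theorem~\ref{thm:OSSS} to the monotonic setting, the only genuinely new ingredient being that ``resampling one bit'' — which leaves a product measure invariant — must be replaced by a \emph{monotone} (Holley/Strassen) coupling of $\mu$ conditioned on successively larger sets of already-revealed bits; the monotonicity of $\mu$ is used precisely to build this coupling and to keep the relevant quantities nonnegative.

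First I would record the variance as a telescoping sum along the decision tree. Sample $\omega\sim\mu$, run $\TT$ on it to produce the exploration sequence $\mathbf{i}=(\mathbf{i}_1,\dots,\mathbf{i}_n)$ and the stopping time $\tau=\tau_{\ff,\TT}(\omega)$, and set $\calG_t:=\sigma(\mathbf{i}_{[t]},\omega_{\mathbf{i}_{[t]}})$, the information available after $t$ queries (so $\mathbf{i}_{t+1}$ is $\calG_t$-measurable, $\calG_0$ is trivial, and $\{\tau\ge t\}\in\calG_{t-1}$ since $\tau$ is a stopping time). Because $\calG_\tau$ determines $\ff$, by the very definition of $\tau$, the bounded martingale $M_t:=\bbE_\mu[\ff\mid\calG_t]$ obeys $M_0=\bbE_\mu[\ff]$ and $M_t=\ff$ for $t\ge\tau$, so by orthogonality of martingale increments,
$$\mathrm{Var}_\mu(\ff)=\bbE_\mu\Big[\sum_{t=1}^n(M_t-M_{t-1})^2\mathbbm1_{t\le\tau}\Big]=\sum_{i=1}^n\sum_{t=1}^n\bbE_\mu\big[(M_t-M_{t-1})^2\mathbbm1_{t\le\tau,\,\mathbf{i}_t=i}\big].$$
Equivalently, one may run the hybrid-configuration argument of the proof of Theorem~\ref{thm:OSSS} verbatim, interpolating between $\omega$ and an independent copy $\widetilde\omega\sim\mu$ by correcting the revealed bits one at a time through the monotone coupling of the relevant conditional laws; the telescoping and the vanishing of the increments past $\tau$ are then checked exactly as in the product case from the defining property of $\tau$.

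Next, fix $t$ and $i$. On the $\calG_{t-1}$-measurable event $\{t\le\tau,\ \mathbf{i}_t=i\}$ one has $M_{t-1}=\bbE_\mu[\ff\mid\calG_{t-1}]$ and $M_t=\bbE_\mu[\ff\mid\calG_{t-1},\omega_i]$, so writing $a_j:=\bbE_\mu[\ff\mid\calG_{t-1},\omega_i=j]$ and $p:=\bbP_\mu[\omega_i=1\mid\calG_{t-1}]$,
$$\bbE_\mu\big[(M_t-M_{t-1})^2\mid\calG_{t-1}\big]=p(1-p)(a_1-a_0)^2\ \le\ p(1-p)(a_1-a_0)=\mathrm{Cov}_\mu(\omega_i,\ff\mid\calG_{t-1}),$$
where $0\le a_1-a_0\le1$ because $\ff$ is increasing and $\mu$ is monotonic (the same stochastic-domination fact is what makes the monotone coupling available in the alternative approach). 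Summing over $t$ and $i$, and recalling that for fixed $i$ the events $\{t\le\tau,\mathbf{i}_t=i\}$, $1\le t\le n$, are disjoint with union of probability $\delta_i(\TT)$, the statement is reduced to the estimate
$$\sum_{i=1}^n\sum_{t=1}^n\bbE_\mu\big[\mathrm{Cov}_\mu(\omega_i,\ff\mid\calG_{t-1})\,\mathbbm1_{t\le\tau,\mathbf{i}_t=i}\big]\ \le\ \sum_{i=1}^n\delta_i(\TT)\,{\rm Inf}_i^\mu(\ff),\qquad {\rm Inf}_i^\mu(\ff)=\frac{\mathrm{Cov}_\mu(\omega_i,\ff)}{\mathrm{Var}_\mu(\omega_i)}.$$

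The hard part is precisely this last inequality — replacing a collection of \emph{conditional} covariances of $\omega_i$ with $\ff$ by the single \emph{unconditional} influence ${\rm Inf}_i^\mu(\ff)$. In the product case this step is automatic: after resampling, the configuration off $i$ still has the unconditional product law, so the conditional influence is literally the unconditional influence, and the estimate holds term by term. For a general monotonic $\mu$ it is genuinely false term by term — the partial exploration seen before $i$ is queried need not be ``typical'', and a conditional covariance can be considerably larger than $\delta_i(\TT)^{-1}$ times its budget — so one cannot avoid a global bookkeeping along the decision tree in which the overshoots are offset along the remainder of the exploration. It is here that one leans on the FKG/Holley stochastic orderings of the conditioned versions of $\mu$: the law of $(\omega_j)_{j\ne i}$ given $\{\omega_i=1\}$ dominates the one given $\{\omega_i=0\}$, both being comparable to the unconditional law, and, together with monotonicity of $\ff$ and of the maps $\xi\mapsto\bbE_\mu[\ff\mid\omega_F=\xi,\omega_i=j]$, this lets one absorb the aggregate of conditional covariances of $\omega_i$ into $\sum_i\delta_i(\TT)\,{\rm Inf}_i^\mu(\ff)$. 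This is the crux of the argument and the only place where the structure genuinely departs from the product case; once it is in hand, summing over $i$ concludes.
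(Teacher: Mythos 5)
You have correctly identified where the difficulty lies, but you have not resolved it: the final inequality
$$\sum_{i=1}^n\sum_{t=1}^n\bbE_\mu\big[\mathrm{Cov}_\mu(\omega_i,\ff\mid\calG_{t-1})\,\mathbbm1_{t\le\tau,\mathbf{i}_t=i}\big]\ \le\ \sum_{i=1}^n\delta_i(\TT)\,{\rm Inf}_i^\mu(\ff)$$
is exactly the whole content of the theorem beyond the product case, and your proposal only asserts it. You yourself observe that it fails term by term and that some ``global bookkeeping along the decision tree'' is needed to offset the overshoots, but no such bookkeeping is supplied; invoking ``FKG/Holley stochastic orderings'' of the conditioned measures does not by itself produce the cancellation, and it is not clear that any argument starting from this reduction (bounding squared martingale increments by conditional covariances first, and only then trying to compare conditional with unconditional influences) can be closed. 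Everything before that point — the martingale/telescoping decomposition, the identity $\bbE_\mu[(M_t-M_{t-1})^2\mid\calG_{t-1}]=p(1-p)(a_1-a_0)^2$, the use of monotonicity of the conditioned measure to get $a_1\ge a_0$ — is correct but routine. So the proposal is a plan with its crux missing, not a proof.

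For comparison: the paper itself does not prove this theorem either; it states that the proof ``combines the strategy for product spaces with encoding of monotonic measures via iid random variables'' and defers to \cite{DumRaoTas17}. The actual argument there does not attempt your post-hoc comparison of conditional and unconditional covariances. Instead, the monotone coupling is built into the \emph{construction} of the hybrid configurations: one realizes $\omega\sim\mu$ by revealing bits in the (adaptive) order dictated by $\TT$, each bit being obtained by inverting its conditional law against an iid uniform variable, and one builds $\omega^t$ by substituting independent uniforms for the first $t$ steps while keeping the same sequential rule afterwards. Monotonicity of $\mu$ guarantees that $\omega^t$ and $\omega^{t-1}$ are automatically ordered according to the order of the two bits where they first differ, and this is what yields a per-step bound directly in terms of the \emph{unconditional} influence ${\rm Inf}_i^\mu(\ff)$ times the revealment. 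If you want to complete your write-up, you should abandon the martingale reduction and carry out that coupling construction explicitly.
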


The proofs of these statements are not immediate. They are combinations of the strategy for product spaces with encoding of monotonic measures via iid random variables.
 We refer the reader to the corresponding articles \cite{GraGri06} and \cite{DumRaoTas17} for more details. The dependency of the constants $c_\mu$ on $\mu$ is fairly explicit. We do not enter into details but let us say that for random-cluster models with $p$ away from $0$ and $1$, these constants are also away from 0 and infinity.

\subsection{Applications to ferromagnetic lattice spin models}

We conclude these lectures by mentioning one application of the previous results to the study of random colorings of lattice models.
Lattice models have been introduced as discrete models for real life experiments and were later on found useful to model a large variety of phenomena and systems ranging from ferroelectric materials to lattice gas. They also provide discretizations of Euclidean and Quantum Field Theories and are as such important from the point of view of theoretical physics. While the original motivation came from physics, they appeared as extremely complex and rich mathematical objects, whose study required the development of important new tools that found applications in many other domains of mathematics. 

The zoo of lattice models is very diverse: it includes models of spin-glasses, quantum chains, random surfaces, spin systems, percolation models. Here, we focus on a smaller class of lattice models called spin systems. These systems are random collections of spin variables assigned to the vertices of a lattice. The archetypal examples of such models are provided by the Ising model, for which spins take values $\pm 1$, and the Potts model, for which spins take values in a finite set $\{1,\dots,q\}$ representing colors (note that the Ising model corresponds to the Potts model with $q=2$, where $+1$ and $-1$ are identified with the two colors 1 and 2). We refer to \cite{Gri06} for more details.

The random-cluster model is related to the Potts models via a simple coupling: one obtains the Potts models by coloring  each connected component of the random-cluster configuration $\omega$ uniformly at random\footnote{Meaning that for each connected component $\calC$, one chooses a color $\sigma_\calC$ uniformly and independently of the choices for other connected components, and then one assigns to every vertex $x$ of $\bbZ^d$ the color $\sigma_x$ equal to $\sigma_\calC$ where $\calC$ is the unique connected component of $\omega$ containing $x$. Note that automatically, all the vertices in the same connected component receive the same color.}. As a consequence, one can obtain new results on these models, such as rigorous computations of the so-called critical inverse temperatures separating the disordered phase from the ordered phase, the exponential decay of correlations in the disordered phase, the continuity/discontinuity of the phase transition in two dimensions, etc.

There are many models of statistical physics, and therefore many potential applications of dependent percolation models. In particular, let us mention that the techniques described in these notes were also very useful to study continuous percolation models. We refer to \cite{DumRaoTas17a,DumRaoTas17b} for two typical examples. In conclusion, the use of abstract sharp threshold inequalities, which are not necessarily intuitive from the point of view of physics, will probably generalize in the next few years, and we expect a number of breakthroughs in the field based on similar ideas.

\bibliographystyle{plain}

\end{document}